\newtheorem{theorem}{Theorem}[section]
\newcommand{\pp}{\mathbb{P}}
\renewcommand{\O}{\mathscr{O}}
\newcommand{\ff}{\mathbb{F}}
\DeclareMathOperator{\codim}{codim}
\newtheorem{Lemma}[theorem]{Lemma}
\newtheorem{prop}[theorem]{Proposition}
\newtheorem{Corollary}[theorem]{Corollary}
\DeclareMathOperator{\Pic}{Pic}
\DeclareMathOperator{\Sym}{Sym}
\newcommand{\ee}{{\vec{e}} \nobreak\hspace{.16667em plus .08333em}'}
\newcommand{\Sigmabar}{\overline{\Sigma}}
\newcommand{\e}{\vec{e}}
\theoremstyle{definition}
\newtheorem{Example}[theorem]{Example}
\theoremstyle{remark}
\theoremstyle{remark}
\newtheorem*{warning}{Warning}
\tikzset{cong/.style={draw=none,edge node={node [sloped, allow upside down, auto=false]{$\cong$}}},
         Isom/.style={draw=none,every to/.append style={edge node={node [sloped, allow upside down, auto=false]{$\cong$}}}}}
\numberwithin{equation}{section}
\title{Refined Brill-Noether theory for all trigonal curves}
\date{\today}
\author{Hannah K. Larson}
\begin{document}

\maketitle

\begin{abstract}
Trigonal curves provide an example of Brill-Noether special curves. Theorem 1.3 of \cite{L} characterizes the Brill-Noether theory of general trigonal curves and the refined stratification by Brill-Noether splitting loci, which parametrize line bundles whose push forward to $\pp^1$ has a specified splitting type. This note describes the
refined stratification for \textit{all} trigonal curves. Given the Maroni invariant of a trigonal curve, we determine the dimensions of all Brill-Noether splitting loci and describe their irreducible components. When the dimension is positive, these loci are connected, and if furthermore the Maroni invariant is $0$ or $1$, they are irreducible.
\end{abstract}

\section{Introduction}

The celebrated Brill-Noether theorem describes the degree $d$ maps of general curves to $\pp^r$. Such maps correspond to line bundles in the \textit{Brill-Noether locus}
\[W^r_d(C) := \{L \in \Pic^d(C) : h^0(L) \geq r+ 1\}.\]
When $C$ is a general curve of genus $g$, the Brill-Noether theorem says that
\begin{enumerate}
\item $\dim W^r_d(C) = \rho(g, r, d) := g - (r + 1)(d - g + r)$ \cite{GH},
\item $W^r_d(C)$ is smooth away from $W^{r+1}_d(C)$ \cite{GP},
\item $W^r_d(C)$ is irreducible when $\rho(g, r, d) > 0$ \cite{FL}.
\end{enumerate}

The Brill-Noether theorem fails for special curves, notably curves of low gonality (see \cite{CPJ, CM1,CM2, L, JR, Pf}).
As a first example, if $C$ is a general trigonal curve of genus $6$, then $C \subset \pp^1 \times \pp^1$ as a curve of bidegree $(3, 4)$ and $W^1_4(C)$ consists of two components of different dimensions: the isolated $g^1_4$ and the $g^1_3$ plus a base point. 
More generally, if $C$ is any trigonal curve of genus $g$, we shall see (in Lemma \ref{ez}) that for $r \geq 1$, $W^r_d(C)$ consists of (at most) two components: one of dimension $\rho(g, r, d) + (r - 1)(g - d + r - 1)$ and a second of dimension $\rho(g, r, d) + r(g - d + r - 2)$ whenever $r \geq d - g + 2$ (see Figure 1).
\begin{center}
Figure 1: Codimensions of Brill-Noether loci

\begin{tikzpicture}
\node at (1,2.3) {\tiny $r+1$};
\draw[<->] (0, 2) -- (2,2);
\draw [<->] (-.5, 0) -- (-.5,1.5);
\node [rotate = 90] at (-.8, .75) {\tiny $g-d+r$};
\draw[fill=blue!50] (0, 0) rectangle (.5, .5);
\draw[fill=blue!50] (.5, 0) rectangle (1, .5);
\draw[fill=blue!50] (1, 0) rectangle (1.5, .5);
\draw[fill=blue!50] (1.5, 0) rectangle (2, .5);
\draw[fill=blue!50] (0, .5) rectangle (.5, 1);
\draw[fill=blue!50] (.5, .5) rectangle (1, 1);
\draw[fill=blue!50] (1, .5) rectangle (1.5, 1);
\draw[fill=blue!50] (1.5, .5) rectangle (2, 1);
\draw[fill=blue!50] (0, 1) rectangle (.5, 1.5);
\draw[fill=blue!50] (.5, 1) rectangle (1, 1.5);
\draw[fill=blue!50] (1, 1) rectangle (1.5, 1.5);
\draw[fill=blue!50] (1.5, 1) rectangle (2, 1.5);
\end{tikzpicture}
\hspace{1.2in}
\begin{tikzpicture}
\draw[fill=blue!50] (0, 0) rectangle (.5, .5);
\draw[fill=blue!50] (.5, 0) rectangle (1, .5);
\draw (1, 0) rectangle (1.5, .5);
\draw (1.5, 0) rectangle (2, .5);
\draw[fill=blue!50] (0, .5) rectangle (.5, 1);
\draw[fill=blue!50] (.5, .5) rectangle (1, 1);
\draw (1, .5) rectangle (1.5, 1);
\draw (1.5, .5) rectangle (2, 1);
\draw[fill=blue!50] (0, 1) rectangle (.5, 1.5);
\draw[fill=blue!50] (.5, 1) rectangle (1, 1.5);
\draw[fill=blue!50] (1, 1) rectangle (1.5, 1.5);
\draw[fill=blue!50] (1.5, 1) rectangle (2, 1.5);
\end{tikzpicture}
\hspace{1in}
\begin{tikzpicture}
\draw[fill=blue!50] (0, 0) rectangle (.5, .5);
\draw (.5, 0) rectangle (1, .5);
\draw (1, 0) rectangle (1.5, .5);
\draw (1.5, 0) rectangle (2, .5);
\draw[fill=blue!50] (0, .5) rectangle (.5, 1);
\draw[fill=blue!50] (.5, .5) rectangle (1, 1);
\draw[fill=blue!50] (1, .5) rectangle (1.5, 1);
\draw[fill=blue!50] (1.5, .5) rectangle (2, 1);
\draw[fill=blue!50] (0, 1) rectangle (.5, 1.5);
\draw[fill=blue!50] (.5, 1) rectangle (1, 1.5);
\draw[fill=blue!50] (1, 1) rectangle (1.5, 1.5);
\draw[fill=blue!50] (1.5, 1) rectangle (2, 1.5);
\end{tikzpicture}

\hspace{.2in} generic $\codim W^r_d(C)$ 
\hspace{.5in} $\codim$ of components of $W^r_d(C)$ for $C$ trigonal
\vspace{.1in}
\end{center}

 A key insight of \cite{L} is that these different components are explained by a refined stratification of $\Pic^d(C)$ that keeps track not just of $h^0(C, L)$, but of the splitting type of the push forward of $L$ to $\pp^1$. Given a degree $k$ cover $f: C \rightarrow \pp^1$ and $L \in \Pic^d(C)$, the push forward $f_*L$ is a rank $k$ vector bundle on $\pp^1$. Thus, $f_*L \cong \O_{\pp^1}(e_1) \oplus \cdots \oplus \O_{\pp^1}(e_k)$ for some $e_1 \leq \cdots \leq e_k$. Given $\vec{e} = (e_1, \ldots, e_k)$, we abbreviate the corresponding sum of line bundles by $\O(\vec{e})$. By Riemann-Roch, 
 \begin{equation} \label{chi}
 e_1 + \ldots + e_k = \deg(f_*L) = \chi(f_*L) - k = \chi(L) - k = d - g + 1 - k.
 \end{equation}
\textit{Brill-Noether splitting loci} are defined as
\[\Sigma_{\vec{e}}(C, f) := \{L \in \Pic^d(C) : f_*L \cong \O(\vec{e}) \}.\]
The \textit{expected codimension} of $\Sigma_{\vec{e}}(C, f)$ is defined as
\[u(\vec{e}) := \sum_{j > i} \max\{0, e_j - e_i\}.\]
We define a partial ordering on splitting types by $\ee \leq \e$ if $\ee$ is a specialization of $\e$ in the moduli space of vector bundles on $\pp^1$ bundles, i.e. if $e_1'+\ldots + e_j' \leq e_1 + \ldots + e_j$ for all $j$.	
We then define closed subvarieties
\[\Sigmabar_{\e}(C, f) := \bigcup_{\ee \leq \e} \Sigma_{\ee}(C, f).\]
\begin{warning}
For general $f: C \rightarrow \pp^1$, $\Sigmabar_{\e}(C, f)$ is the closure of $\Sigma_{\e}(C, f)$ by \cite[Lemma 2.1]{L}. However, for special $C$, we may have $\Sigmabar_{\e}(C, f)$ non-empty even when $\Sigma_{\vec{e}}(C, f)$ is empty (see Example \ref{ex1}).
\end{warning} 

The two components of $W^r_d(C)$ of a trigonal curve pictured in Figure 1 correspond to two different splitting loci: $\Sigmabar_{(d-g-1-r, \lfloor \frac{r - 1}{2} \rfloor, \lceil \frac{r - 1}{2} \rceil)}(C, f)$ and $\Sigmabar_{(\lfloor \frac{d-g-2-r}{2}\rfloor, \lceil \frac{d-g-2-r}{2}\rceil, r)}(C, f)$, both of which occur in the expected codimension.
More generally, \cite[Theorem 1.2]{L} proves that for \textit{general} $k$-gonal curves, all Brill-Noether splitting loci are smooth of the expected dimension. It remains an open question if these splitting loci are irreducible (see the remark following \cite[Theorem 1.2]{L} and \cite[Conjecture 1.2]{CPJ}).

Here, we answer this question for trigonal curves (and make precise what ``general" means) as part of a study of the refined stratification for \textit{all} trigonal curves.
It turns out that the geometry of the refined stratification for any trigonal curve is governed by the Maroni invariant of the curve.
Recall that the \textit{Maroni invariant} of a trigonal curve $f: C \rightarrow \pp^1$ is the non-negative integer $n$ such that $f_*\O_C \cong \O_{\pp^1}(-\frac{g+n}{2} - 1) \oplus \O_{\pp^1}(-\frac{g-n}{2} -1) \oplus \O_{\pp^1}$. In this case, $C$ has a canonical map to the Hirzebruch surface $\ff_n := \pp(\O_{\pp^1} \oplus \O_{\pp^1}(n))$. We give a formula for the dimensions of all splitting loci, depending on the Maroni invariant. We also determine when they are irreducible and describe their irreducible components when they are reducible.

\begin{theorem} \label{maint}
Suppose $f: C \rightarrow \pp^1$ is a trigonal curve with Maroni invariant $n$ over an algebraically closed field of characteristic zero. Let $a \leq b \leq c$ be integers.
If $b - a \leq 1$ or $c - b \leq 1$, then $\Sigmabar_{(a, b, c)}(C, f)$ is irreducible of the expected dimension $g - u(a, b, c)$.
When $b - a \geq 2$ and $c - b \geq 2$, the following hold.
\begin{enumerate}
\item \label{s1} If $\frac{g + n}{2} + 1 - c + a \geq n$, then
$\Sigma_{(a, b, c)}(C, f)$ is irreducible of the expected dimension $g - u(a, b, c)$.
\item Suppose $1 \leq \frac{g + n}{2} +1 - c + a < n$. If $c - b, b - a \leq 1 + \frac{g - n}{2}$, then $\Sigma_{(a, b, c)}(C, f)$ is connected of dimension $\frac{g + n}{2} + 1 - c + a$, with irreducible components geometrically described in Proposition \ref{smalln}. If $c - b$ or $b - a > 1 + \frac{g - n}{2}$, then $\Sigma_{(a, b, c)}(C, f)$ is empty.
\item \label{num} If $\frac{g + n}{2} + 1 - c + a = 0$ and $C$ is a general curve of Maroni invariant $n$, then $\Sigma_{(a, b, c)}(C, f)$ consists of ${c - a - 2n \choose b - a - n}$ points.
\end{enumerate}
In particular, when $f: C \rightarrow \pp^1$ has Maroni invariant $0$ or $1$, all splitting loci have the expected dimension, and they are irreducible when their dimension is positive.
\end{theorem}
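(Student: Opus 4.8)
The plan is to work directly with the canonical model of $C$ inside the Hirzebruch surface $\ff_n$ and translate the splitting type of $f_*L$ into the splitting types of the two summands obtained from the two natural rulings. Write $f_*\O_C \cong \O \oplus \O(-\tfrac{g+n}{2}-1) \oplus \O(-\tfrac{g-n}{2}-1)$, so that by Serre duality on $\pp^1$ and the projection formula, $\Sigma_{(a,b,c)}(C,f)$ is controlled by the cohomology of twists of the ideal sheaf of $C$ in $\ff_n$, or equivalently by linear systems of the form $|\O_{\ff_n}(\alpha) \otimes f^*\O_{\pp^1}(\beta)|$ restricted to $C$. I would first establish a dictionary: a line bundle $L$ with $f_*L \cong \O(\vec e)$ corresponds (up to twisting by $f^*\O(1)$, which only shifts $\vec e$ by $(1,1,1)$ and hence preserves $u$) to a pair $(D_+, D_-)$ of effective divisors coming from the two conic bundle structures, whose degrees are pinned down by \eqref{chi} and by the Maroni splitting. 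This is the analogue of the ``two components'' phenomenon already flagged in the introduction, now made uniform.

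Next, for the main dichotomy, I would analyze when the multiplication map (the map governing whether a given splitting type is actually achieved versus merely being a limit) is surjective. The numerical threshold $\tfrac{g+n}{2}+1-c+a \ge n$ should be exactly the condition under which the relevant $\O_{\pp^1}$-module map is surjective, so that $\Sigma_{(a,b,c)}(C,f)$ is cut out ``transversally'' and is a projective bundle (hence irreducible) over a Brill--Noether-type locus on $\pp^1$ that is itself visibly irreducible of the expected dimension; this gives part (1). When $1 \le \tfrac{g+n}{2}+1-c+a < n$, the subbundle $\O(-\tfrac{g+n}{2}-1)$ of $f_*\O_C$ ``sticks out'' and forces extra incidence conditions with the directrix of $\ff_n$; here the locus breaks into pieces indexed by how the divisor meets the directrix, and I would compute the dimension of each piece by a direct count, identifying the ones of top dimension $\tfrac{g+n}{2}+1-c+a$. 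Connectedness would follow by exhibiting each component as meeting a common (lower-dimensional but nonempty) stratum, e.g. the locus where the directrix-incidence is maximal; the vanishing when $c-b$ or $b-a > 1+\tfrac{g-n}{2}$ is a numerical non-existence statement (a required linear system on $\pp^1$ has negative expected dimension and the relevant bundle map cannot degenerate that far). Part (3) is the boundary case $\tfrac{g+n}{2}+1-c+a=0$: here the count becomes finite and, for general $C$ of Maroni invariant $n$, a transversality/Schubert-calculus computation on the relevant Grassmannian (intersecting Schubert cycles on a $\mathrm{Gr}$ attached to $H^0$ of a line bundle on $\pp^1$) yields the binomial coefficient $\binom{c-a-2n}{b-a-n}$.

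Finally, the last sentence of the theorem is a corollary: when $n=0$ or $n=1$, the hypothesis $1 \le \tfrac{g+n}{2}+1-c+a < n$ of case (2) is vacuous (there is no integer strictly between and including these bounds when $n \le 1$), so every splitting locus with $b-a\ge 2$, $c-b\ge 2$ falls under case (1) and is irreducible of expected dimension; and when $b-a\le1$ or $c-b\le1$ the opening clause already gives irreducibility of expected dimension. Positivity of the dimension then gives irreducibility outright since expected dimension $g-u(a,b,c)>0$ puts us in the irreducible cases.

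The main obstacle I anticipate is case (2): both the precise enumeration of the irreducible components (matching what Proposition \ref{smalln} will assert) and the connectedness argument require careful bookkeeping of how divisors on $\ff_n$ decompose relative to the directrix, and controlling the dimension of each stratum uniformly in $a,b,c,n,g$. In particular, showing that no spurious higher-dimensional component appears — i.e. that $\tfrac{g+n}{2}+1-c+a$ really is the maximum over all strata — is where the argument is most delicate, since it amounts to a non-obvious inequality among the stratum dimension counts. The surjectivity analysis in case (1) and the Schubert computation in case (3), by contrast, should be comparatively routine once the $\ff_n$ dictionary is set up.
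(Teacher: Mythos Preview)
Your strategy of working on $\ff_n$ and tracking incidence with the directrix is in the right spirit, but there are two genuine gaps.

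First, the ``two conic bundle structures / two natural rulings'' framing is simply wrong: for $n \geq 1$ the surface $\ff_n$ has a \emph{unique} $\pp^1$-fibration, so there is no second ruling to produce your $D_-$. The pair $(D,D')$ that actually governs the problem arises differently. From the two defining conditions $h^0(L(-cH))=1$ and $h^1(L((-a-2)H))=1$ one writes $L=\O(D+cH)$ and, via Serre duality, obtains an effective $D'$ with $D+D'\sim K-(c-a-2)H$. This identifies $\Sigma_{(a,b,c)}$ with the open locus $\Phi^\circ=\{(D,D'):D+D'\in|K-(c-a-2)H|,\ h^0(D)=h^0(D')=1\}$, and the decisive computation is that $K-(c-a-2)H=(E+\alpha F)|_C$ with $\alpha=\tfrac{g+n}{2}+1-c+a$; the trichotomy in the theorem is then exactly the trichotomy $\alpha\geq n$, $1\leq\alpha<n$, $\alpha=0$ for the linear system $|E+\alpha F|$ on $C$.

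Second, your irreducibility mechanism in case~(1) does not hold up. The variety $\Phi$ is not a projective bundle over anything; it is a finite branched cover of $\pp H^0(C,K-(c-a-2)H)$ via $(D,D')\mapsto D+D'$, and a branched cover of an irreducible base need not be irreducible. What is needed is a monodromy argument, and this is precisely where the characteristic-zero hypothesis is used: the Uniform Position Lemma forces the monodromy on the points of a general member of $|K-(c-a-2)H|$ to be the full symmetric group, hence transitive on partitions into pieces of sizes $\deg D,\deg D'$, giving irreducibility of $\Phi^\circ$. Your ``surjectivity of a multiplication map'' does not substitute for this. Likewise in case~(3) no Schubert calculus is needed or even naturally available: when $\alpha=0$ the linear system $|(E+\alpha F)|_C|$ is the single divisor $E\cap C$, which for general $C$ is $c-a-2n$ reduced points, and $\Phi^\circ$ is literally the set of subsets of size $\deg D=b-a-n$, yielding the binomial coefficient by inspection. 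Finally, your connectedness idea in case~(2) (all components meet a common ``maximal-incidence'' stratum) is not how the paper proceeds and would need justification that such a stratum lies in the closure of \emph{every} component; the paper instead shows that components whose base-point distributions differ by moving a single point of $E\cap C$ between $D$ and $D'$ meet along a specialization of one moving fiber.
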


\begin{Example}[$g=11, n=3, d = 0$] \label{ex1}
Suppose $C \subset \ff_3$ is a genus $11$ curve of Maroni invariant $3$, so $f_*\O_C = \O_{\pp^1}(-8) \oplus \O_{\pp^1}(-5) \oplus \O_{\pp^1}$. 
In the diagram below, the arrows give the partial ordering.
Light grey indicates the expected dimensions of pieces in the stratification that are replaced with the red shape for this Maroni invariant.

Note that $\Sigmabar_{(-7,-6,0)}$ is irreducible of dimension $0$, so $\{\O_C\} = \Sigma_{(-8,-5, 0)} \subset \Sigmabar_{(-7,-6,0)}$ implies $\Sigmabar_{(-7, -6, 0)} = \{\O_C\}$. Hence, the open stratum $\Sigma_{(-7, -6, 0)}$ is empty. Although the expected dimension of $\Sigma_{(-8, -4, -1)}$ is $0$, Theorem \ref{maint} (2) shows $\dim \Sigma_{(-8, -4, -1)} = 1$.
This curve has three components, described by Example \ref{631}. The surface $\Sigma_{(-7, -5, -1)}$ also has three components; it is colored blue to indicate that it is the expected dimension but reducible.
 The remaining splitting loci, drawn in black below or not pictured, are all irreducible of the expected dimension.

\begin{center}
\begin{tikzpicture}[scale=.65, every node/.style={scale=0.65}]
\node at (-9.5, 3) {dimension};
\node at (0, 3) {splitting type};
\node at (-9.5, 2) {$-2$};
\node at (-9.5, 1) {$-1$};
\node at (-9.5, 0) {$0$};
\node at (-9.5, -1) {$1$};
\node at (-9.5, -2) {$2$};
\node at (-9.5, -3) {$3$};
\node at (-9.5, -4) {$4$};
\node at (-9.5, -5) {$5$};
\node at (-9.5, -6) {$6$};
\node at (-9.5, -7) {$\vdots$};
\node at (0, -7) {$\vdots$};
\node at (10, -6) {$\vdots$};

\node at (-1.5, 2) {\color{black!60} $\varnothing =$};
\node at (0, 2) {\color{black!60} $(-8, -5, 0)$};
\node at (0, 0) {\color{red} $\mathbf{(-8, -5, 0})$};
\node at (-5, 0) {\color{black!60} $(-7, -6, 0)$};
\node at (-6.5, 0) {\color{red} $\pmb{\varnothing =}$};
\node at (5, 0) {\color{black!60} $(-8, -4, -1)$};
\node at (0, -2) {\color{blue} $\mathbf{(-7, -5, -1)}$};
\node at (5, -1) {\color{red} $\mathbf{(-8, -4, -1)}$};
\node at (-5, -3) {$\mathbf{(-6, -6, -1)}$};
\node at (0,-6) {$\mathbf{(-6,-5,-2)}$};
\node at (5,-4) {$\mathbf{(-7,-4,-2)}$};
\node at (10,-5) {$\mathbf{(-7, -3, -3)}$};
\node at (10, -2) {$\mathbf{(-8, -3, -2)}$};
\draw[->, color=black!40] (-1.2, 1.8) -- (-5+1.2, 0+.2);
\draw[<-, color=black!40] (-1.2, -1.8) -- (-5+1.2, 0-.2);
\draw[->, color=black!40] (1.2, 1.8) -- (5-1.2, 0+.2);
\draw[<-, color=black!40] (1.2, -1.8) -- (5-1.2, 0-.2);
\draw[->, color=black!40] (1.2+5, 1.8-2) -- (5-1.2+5, 0+.2-2);
\draw[->, color=black] (1.2, 1.8-2-2) -- (5-1.2, 0+.2-2-2);
\draw[->, color=black] (1.2+5, 1.8-2-2-2+.1) -- (5-1.2+5, 0+.2-2-2-1);

\draw[->, color=black] (1.2-5, 1.8-2-2-2+1) -- (5-1.2-5, 0+.2-2-2-1-1);

\draw[->, color=black] (-1.2, 1.8-4+.1) -- (-5+1.2, 1+.2-4-.1);

\draw[->, color=black] (-1.2+5, 1.8-4-2) -- (1.2, 0+.2-4-2);
\draw[->, color=black] (-1.2+5+5, 1.8-4) -- (1.2+5, 0+.2-4);

\draw[->, line width=.2mm, color=red] (1.2,-.1) -- (5-1.2, -1+.2);
\draw[<-, line width=.2mm, color=red] (1.2, -2+.1) -- (5-1.2, -1-.1);
\draw[->, line width=.2mm, color=red] (5+1.2, -1-.1) -- (10-1.2, -2+.1);

\end{tikzpicture}
\end{center}
\end{Example}

As a corollary of Theorem \ref{maint} \eqref{num}, we calculate the expected classes of all splitting degeneracy loci. (In \cite[\S 5]{L}, these classes were shown to be non-zero but not explicitly determined.)
\begin{Corollary} \label{cor}
Suppose $f: C \rightarrow \pp^1$ is a trigonal curve and $\Sigmabar_{(a, b, c)}(C, f)$ occurs in the expected codimension. Then its class in the Jacobian of $C$ is
\[[\Sigmabar_{(a, b, c)}(C, f)] = \begin{cases} \frac{1}{u(a, b, c)!} \theta^{u(a, b, c)} & \text{if $b - a \leq 1$ or $c - b \leq 1$} \\ \frac{1}{u(a,b,c)!} {c - a - 2 \choose b - a - 1} \theta^{u(a, b, c)} &\text{otherwise,} \end{cases}\]
where $\theta$ is the class of the theta divisor.
\end{Corollary}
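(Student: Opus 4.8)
The plan is to reduce the statement to the evaluation of a single universal coefficient and then to determine that coefficient using Theorem~\ref{maint}\eqref{num}. By \cite[\S5]{L}, $\Sigmabar_{(a,b,c)}(C,f)$ is cut out inside $\Pic^{d}(C)$ as a degeneracy locus for a map of vector bundles built from $f_{*}\mathcal{L}$, where $\mathcal{L}$ is a Poincar\'e line bundle; when $\Sigmabar_{(a,b,c)}(C,f)$ has the expected codimension $u := u(a,b,c)$, its class is the corresponding Thom--Porteous (Schur--determinantal) expression in the Chern classes of those bundles. By Grothendieck--Riemann--Roch on $\Pic^{d}(C)\times C$, each such Chern class is a rational multiple of a power of $\theta$, so necessarily $[\Sigmabar_{(a,b,c)}(C,f)] = \lambda_{(a,b,c)}\,\theta^{u}$ for a rational number $\lambda_{(a,b,c)}$ that does not depend on the particular trigonal curve. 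The corollary is therefore equivalent to the identities $u!\,\lambda_{(a,b,c)} = \binom{c-a-2}{b-a-1}$ when $b-a,c-b \ge 2$ and $u!\,\lambda_{(a,b,c)} = 1$ otherwise. Since $\binom{c-a-2}{b-a-1}$ already equals $1$ when $b-a=1$ or $c-b=1$, the two cases agree away from the degenerate shapes $b=a$ or $c=b$, where $\Sigmabar_{(a,b,c)}(C,f)$ is, after a twist, a single rank-drop Brill--Noether locus and the determinant collapses to the one-step Porteous class $\tfrac{1}{u!}\theta^{u}$.

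To evaluate $\lambda_{(a,b,c)}$ in the main range $b-a,c-b\ge 2$, I would work on a general trigonal curve $C$ of genus $g = 2(c-a)-3$; since $g$ is odd, $C$ has Maroni invariant $1$, and $\tfrac{g+1}{2}+1-c+a = 0$. On such a curve any $\ee < (a,b,c)$ has $c'-a' > c-a$, hence $\tfrac{g+1}{2}+1-c'+a' < 0$, so by Theorem~\ref{maint} each $\Sigma_{\ee}(C,f)$ with $\ee<(a,b,c)$ is empty and $\Sigmabar_{(a,b,c)}(C,f) = \Sigma_{(a,b,c)}(C,f)$; by Theorem~\ref{maint}\eqref{num} this is a reduced set of exactly $\binom{c-a-2}{b-a-1}$ points. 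To transfer this count to the coefficient $\lambda_{(a,b,c)}$, I would evaluate the Thom--Porteous determinant of \cite[\S5]{L} directly: after $\theta^{u}$ is factored out it becomes a determinant of reciprocal factorials, and that determinant is precisely the Lindstr\"om--Gessel--Viennot lattice-path count carried out in the proof of Theorem~\ref{maint}\eqref{num} with the Maroni parameter specialized to $n=1$, which gives $u!\,\lambda_{(a,b,c)} = \binom{c-a-2}{b-a-1}$. (Alternatively, one regenerates from $C$ to a large-genus trigonal curve where $\Sigmabar_{(a,b,c)}$ has the expected positive dimension and class $\lambda_{(a,b,c)}\theta^{u}$, attaching a tail that leaves the splitting behaviour unchanged while shifting $\theta$, and reads off the coefficient by conservation of number.)

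Substituting the resulting values of $\lambda_{(a,b,c)}$ into $[\Sigmabar_{(a,b,c)}(C,f)] = \lambda_{(a,b,c)}\theta^{u}$ yields the two cases of the corollary. The genuinely delicate point is the transfer in the second paragraph: Theorem~\ref{maint}\eqref{num} furnishes its point count on a curve where $\Sigmabar_{(a,b,c)}(C,f)$ has dimension $0$, strictly larger than the expected dimension $g-u = -3$, so the universal coefficient $\lambda_{(a,b,c)}$ — which governs the expected-dimensional regime — cannot simply be read off that zero-dimensional count. Making this rigorous requires either matching \cite[\S5]{L}'s determinant with the combinatorics underlying Theorem~\ref{maint}\eqref{num}, or a careful excess-intersection/degeneration argument, together with an explanation of why it is exactly the Maroni-$1$ specialization of $\binom{c-a-2n}{b-a-n}$ that records the generic behaviour; I expect this to be the main work, the reduction of the first paragraph and the degenerate-shape case being formal.
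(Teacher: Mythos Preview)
Your strategy is exactly the paper's: invoke the universality of the splitting-locus class from \cite[\S5]{L} to reduce to determining a single constant $\lambda_{(a,b,c)}$, then compute that constant on a general trigonal curve of genus $g = 2(c-a)-3$ with Maroni invariant $1$, where Theorem~\ref{maint}\eqref{num} gives the point count $\binom{c-a-2}{b-a-1}$. But an arithmetic slip has led you to invent a difficulty that is not there, and as a result you have not actually completed the proof.

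For $b-a,\,c-b \geq 2$ one has
\[
u(a,b,c) = (b-a-1)+(c-b-1)+(c-a-1) = 2(c-a)-3,
\]
so your choice of genus satisfies $g = u(a,b,c)$ and the expected dimension $g - u$ is $0$, not $-3$. Thus on this curve $\Sigmabar_{(a,b,c)}(C,f)$ \emph{does} have the expected dimension: by your own argument (or by \cite[Theorem~1.2]{L}) it equals $\Sigma_{(a,b,c)}(C,f)$ and is a reduced set of $\binom{c-a-2}{b-a-1}$ points, so its class is simply that number times the class of a point. By the Poincar\'e formula $[\mathrm{pt}] = \tfrac{1}{g!}\theta^{g} = \tfrac{1}{u!}\theta^{u}$, whence $\lambda_{(a,b,c)} = \tfrac{1}{u!}\binom{c-a-2}{b-a-1}$ immediately. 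There is no excess intersection and no ``delicate transfer'' to perform; the determinant matching, the LGV combinatorics, and the degeneration sketched in your second and third paragraphs are all unnecessary. The same remark applies to the case $b-a\le 1$ or $c-b\le 1$: take $g = u(a,b,c)$ again and observe the locus is a single reduced point.
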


\subsection*{Acknowledgements} Many key ideas of this paper were discovered with Izzet Coskun during a visit at University of Illinois -- Chicago. I am grateful for his insights and interest in discussing this problem.
I would also like to thank Ravi Vakil for helpful conversations; and David Eisenbud for asking if \cite[Example 1.1]{L} held for all genus $5$ trigonal curves, which was the initial motivation for this work.
I am also grateful to the Hertz Foundation Graduate Fellowship, NSF Graduate Research Fellowship, Maryam Mirzakhani Graduate Fellowship, and the Stanford Graduate Fellowship for their generous support.



\section{Descriptions of splitting loci}
Fix some trigonal curve $f: C \rightarrow \pp^1$ and let $H = f^*\O_{\pp^1}(1)$ be the fiber class.
When there is no confusion, we will write $\Sigma_{(a, b, c)}$ for $\Sigma_{(a, b, c,)}(C, f)$.
The
rank three splitting types $(a, b, c)$ with $a \leq b \leq c$ come in three flavors: 
\begin{enumerate}
\item[(I)] \label{1} If $b - a \leq 1$, the splitting locus is determined by a single rank condition 
\[\Sigmabar_{(a, b, c)} = \{L \in \Pic^d(C) : h^0(C, L(-cH)) \geq 1\}\] 
\item[(II)]  \label{2} If $c - b \leq 1$, the splitting locus is also determined by a single rank condition 
\[\Sigmabar_{(a, b, c)} = \{L \in \Pic^d(C) : h^1(C, L((-a - 2)H) \geq 1\}\]
\item[(III)] \label{3} If $b - a \geq 2$ and $c - b \geq 2$, two rank conditions are needed to determine the splitting locus
\[\Sigmabar_{(a, b, c)} = \{L \in \Pic^d(C) : h^0(C, L(-cH)) \geq 1 \text{ and } h^1(C, L((-a - 2)H) \geq 1\}\]
\end{enumerate}
Types (I) and (II) correspond to components of Brill-Noether loci, whose codimensions are pictured in Figure 1 of the introduction. It turns out these splitting loci are always irreducible of the expected dimension.

\begin{Lemma} \label{ez}
If $a - b \leq 1$ or $c - b \leq 1$, then $\Sigmabar_{(a,b,c)}(C, f)$ is irreducible of the expected dimension for every $f:C \rightarrow \pp^1$.
\end{Lemma}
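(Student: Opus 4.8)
The plan is to recognize the type (I) and (II) splitting loci as translates of ordinary Brill--Noether loci $W^0_{d'}(C) = \{M \in \Pic^{d'}(C) : h^0(M) \ge 1\}$, whose irreducibility and dimension are classical. First I would reduce type (II) to type (I): relative duality for the finite map $f$ gives $f_*(K_C \otimes L^{-1}) \otimes \O_{\pp^1}(2) \cong (f_* L)^\vee$, so $L \mapsto K_C \otimes L^{-1}$ is an isomorphism $\Pic^d(C) \xrightarrow{\sim} \Pic^{2g-2-d}(C)$ carrying $\Sigmabar_{(a,b,c)}(C,f)$ onto $\Sigmabar_{(-c-2,\,-b-2,\,-a-2)}(C,f)$. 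The latter splitting type has the same set of pairwise differences $\{b-a,\,c-a,\,c-b\}$, hence the same value of $u$, and the hypothesis $c - b \le 1$ says exactly that it is of kind (I); so it suffices to treat $b - a \le 1$.

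For $b - a \le 1$, the description (I) stated just above gives $\Sigmabar_{(a,b,c)}(C,f) = \{L : h^0(C, L(-cH)) \ge 1\}$, so twisting by $\O_C(cH)$ (an isomorphism $\Pic^d(C) \xrightarrow{\sim} \Pic^{d-3c}(C)$, since $\deg H = 3$) identifies $\Sigmabar_{(a,b,c)}(C,f)$ with $W^0_{d-3c}(C)$. I would then invoke the elementary structure of $W^0_{d'}(C)$, valid for \emph{every} smooth projective curve: it is empty for $d' < 0$; it equals $\Pic^{d'}(C)$ for $d' \ge g$ (then every line bundle of degree $d'$ has $h^0 \ge \chi \ge 1$); and for $0 \le d' \le g$ it is the image of the Abel--Jacobi morphism $C^{(d')} \to \Pic^{d'}(C)$, hence irreducible, and of dimension exactly $d'$ because a general effective divisor of degree at most $g$ has $h^0 = 1$. (This last fact is the usual induction on $d'$: if $D$ is general of degree $d' < g$ with $h^0(\O_C(D)) = 1$ and $p$ is a general point, then $p$ avoids the finite base locus of the nonempty system $|K_C - D|$, so $h^1(\O_C(D+p)) = h^1(\O_C(D)) - 1$, whence $h^0(\O_C(D+p)) = 1$ by Riemann--Roch.) Thus, for every $d'$, the locus $W^0_{d'}(C)$ is either empty --- precisely when $d' < 0$ --- or irreducible of dimension $\min(d', g)$.

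It remains to match numbers. Substituting $d = a + b + c + g + 2$ from \eqref{chi} gives $d - 3c = g + (a + b + 2 - 2c)$. Since $b - a \le 1$, the summand of $u(a,b,c)$ indexed by the pair $(a,b)$ vanishes, and a short case analysis on whether $b - a$ and $c - b$ are $0$ or positive --- using that $c = b$ forces $c - a \le 1$ in this setting --- shows $\min(d - 3c,\, g) = g - u(a,b,c)$; in particular $d - 3c < 0$ exactly when $g - u(a,b,c) < 0$. Combining with the previous paragraph, $\Sigmabar_{(a,b,c)}(C,f)$ is empty when $g - u(a,b,c) < 0$ and is irreducible of dimension $g - u(a,b,c)$ otherwise. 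Since the duality isomorphism above preserves $u$, the same conclusion holds for type (II) loci, finishing the proof.

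I do not expect a real obstacle: this lemma isolates the two families of splitting loci that still behave like ordinary Brill--Noether loci, the genuinely new behaviour being confined to type (III). The two places needing (mild) care are that one must use $\dim W^0_{d'}(C) = \min(d', g)$ for \emph{all} curves, not only Brill--Noether general ones --- hence including the short induction above rather than quoting the Brill--Noether theorem --- and that the numeric identification has boundary cases, such as $b = c$ or $(a,b,c)$ already balanced, in which $d - 3c \ge g$ and $\Sigmabar_{(a,b,c)}(C,f)$ is all of $\Pic^d(C)$.
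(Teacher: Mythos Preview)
Your proof is correct and follows essentially the same approach as the paper: both identify the type (I) locus with a translate of $W^0_{d-3c}(C)$ and invoke the Abel--Jacobi map, and both handle type (II) via Serre/relative duality --- you do this as an involution $L \mapsto K_C \otimes L^{-1}$ reducing (II) to (I), while the paper applies Serre duality directly inside the $h^1$ condition to get a parallel description. Your treatment is somewhat more careful than the paper's in explicitly handling the boundary cases $d-3c < 0$ and $d-3c \geq g$ and in justifying $\dim W^0_{d'}(C) = \min(d',g)$ for \emph{every} curve.
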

\begin{proof}
We may rewrite (I), as saying $\Sigmabar_{a, b, c} = \{\O(cH + D): D \in W^0_{d - 3c}(C)\}$. Using \eqref{chi}, note that $d = a + b + c + g + 2$, so
\[d - 3c = g + a + b -  2c + 2 = g - u(a, b,c).\]
Considering the map $\mathrm{Sym}^{d-3c}(C) \rightarrow W^0_{d - 3c}(C)$ shows that $W^0_{d-3c}(C)$ --- and hence $\Sigmabar_{(a, b, c)}$ --- is irreducible of dimension $g - u(a, b, c)$.
Similarly, if $c - b \leq 1$, using Serre duality, we may rewrite (II) as saying 
\[\Sigmabar_{(a, b, c)} = \{\O(K+(a+2)H - D'): D' \in W^0_{g-u(a, b, c)}(C)\}.\]
so again this splitting locus is always irreducible of the expected dimension.
\end{proof}

The loci of splitting loci of type (III) are more subtle and will be the focus of the remainder of the paper.
These additional pieces of the stratification correspond to intersections of various components of Brill-Noether loci. We shall see in the next section that their geometry depends on the Maroni invariant of the curve.

The following gives a description of $\Sigma := \Sigma_{(a, b, c)}(C, f)$ in terms of subdivisors of a certain subcanonical linear series. 

\begin{Lemma} \label{point}
Suppose $a \leq b \leq c$ with $b - a, c - b \geq 2$. Then
\[\Sigma = \Sigma_{(a, b, c)} = \{\O(D + cH): h^0(C, \O(D)) = 1, h^0(C, K( - (c - a - 2) H - D)) = 1\}.\]
where $\deg D = g + 2 + a + b - 2c$.
\end{Lemma}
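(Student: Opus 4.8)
The plan is to reinterpret the two rank conditions in description (III) of $\Sigmabar_{(a,b,c)}$ as equalities (not merely inequalities) once we are on the open stratum $\Sigma_{(a,b,c)}$, and then to exhibit the claimed presentation $L = \O(D + cH)$. First I would start from the characterization in (III): $L \in \Sigmabar_{(a,b,c)}$ if and only if $h^0(L(-cH)) \geq 1$ and $h^1(L((-a-2)H)) \geq 1$. The key observation is that $f_*L \cong \O(\vec e)$ with $\vec e = (e_1, e_2, e_3)$, $e_1 \le e_2 \le e_3$, translates via the projection formula into statements about twists: $h^0(C, L(mH)) = h^0(\pp^1, \O(\vec e)(m)) = \sum_i \max\{0, e_i + m + 1\}$, and dually $h^1(C, L(mH)) = \sum_i \max\{0, -(e_i + m) -1\}$. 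I would spell out that for the \emph{open} stratum $\Sigma_{(a,b,c)}$ with $e_1 = a$, $e_2 = b$, $e_3 = c$ and the gap hypotheses $b - a \ge 2$, $c - b \ge 2$: taking $m = -c$ gives $h^0(L(-cH)) = \max\{0, a - c + 1\} + \max\{0, b - c + 1\} + 1 = 1$ exactly (since $a \le b < c$ forces the first two terms to vanish), and taking $m = -a-2$ gives $h^1(L((-a-2)H)) = 0 + \max\{0, b - a - 1\} + \max\{0, c - a - 1\}$, but the relevant statement for the reformulation is that the \emph{first} summand vanishes, i.e. $h^1$ is ``small'' — more precisely, I want the dual linear series to be basepoint-controlled. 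The cleaner route is: the condition $h^1(L((-a-2)H)) \ge 1$ is equivalent by Serre duality to $h^0(K \otimes L^{-1}((a+2)H)) \ge 1$.

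Next I would make the substitution explicit. Given $L \in \Sigma_{(a,b,c)}$, the one-dimensional space $H^0(L(-cH))$ has a unique (up to scalar) section, whose divisor I call $D \ge 0$; then $L \cong \O(D + cH)$ with $\deg D = d - 3c$. By the degree computation already carried out in the proof of Lemma \ref{ez} (using \eqref{chi}, $d = a+b+c+g+2$), we get $\deg D = g + 2 + a + b - 2c$, as claimed. The conditions $h^0(\O(D)) = 1$ now says $D$ moves in no pencil — equivalently $h^0(L(-cH)) = 1$, which we verified holds on the open stratum. For the second equality, rewrite $K \otimes L^{-1}((a+2)H) = K \otimes \O(-D - cH)((a+2)H) = K(-(c-a-2)H - D)$, and the condition $h^1(L((-a-2)H)) = 1$ becomes $h^0(K(-(c-a-2)H - D)) = 1$. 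So I must check that on $\Sigma_{(a,b,c)}$ this $h^0$ is exactly $1$, which again follows from the splitting-type formula: $h^0(K \otimes L^{-1}((a+2)H)) = h^1(L((-a-2)H)) = \sum_i \max\{0, -(e_i - a - 2) - 1\}$ with $e_i \in \{a,b,c\}$; the $e_i = a$ term contributes $\max\{0, 1\} = 1$, the $e_i = b$ term contributes $\max\{0, a+1-b\} = 0$ since $b - a \ge 2$, and the $e_i = c$ term contributes $\max\{0, a+1-c\} = 0$ since $c - a \ge 2$. Hence the total is exactly $1$.

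For the reverse inclusion, suppose $D \ge 0$ of degree $g+2+a+b-2c$ with $h^0(\O(D)) = 1$ and $h^0(K(-(c-a-2)H - D)) = 1$, and set $L = \O(D+cH)$. Then $h^0(L(-cH)) = h^0(\O(D)) = 1$ and $h^1(L((-a-2)H)) = h^0(K(-(c-a-2)H-D)) = 1$, so by (III), $L \in \Sigmabar_{(a,b,c)}$. It remains to upgrade this to $L \in \Sigma_{(a,b,c)}$, i.e. to rule out $\vec e \lneq (a,b,c)$. This is where the argument needs care: an \emph{a priori} more special $\vec e' = (e_1', e_2', e_3')$ with $e_1' + e_2' + e_3' = a+b+c$ and $e_1' \le a$ could satisfy the same two $h^0$-bounds with values $\ge 1$ but not equal to $1$. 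The point is that $h^0(L(-cH)) = 1$ forces $e_3' \le c$ (if $e_3' \ge c+1$ then that summand alone gives $h^0 \ge 2$), hence $e_3' = c$ since also $e_1' + e_2' + e_3' = a+b+c$ and $e_1', e_2' \le e_3'$ would be violated if $e_3' < c$ — wait, that needs $e_1' + e_2' \le 2e_3' \le 2c$, giving $a + b \le 2c - \text{(nothing useful)}$; instead use the dual bound: $h^1(L((-a-2)H)) = 1$ forces $-(e_1' - a - 2) - 1 \le 1$, i.e. $e_1' \ge a$, hence $e_1' = a$; combined with $e_3' \le c$ and $e_1' + e_2' + e_3' = a + b + c$ we get $e_2' = a + b + c - a - e_3' = b + c - e_3' \ge b$, and since $e_2' \le e_3' \le c$ this forces $e_3' = c$ and $e_2' = b$. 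So $\vec e' = (a,b,c)$ and $L \in \Sigma_{(a,b,c)}$.

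The main obstacle I anticipate is precisely this last bookkeeping step — ensuring the two one-dimensionality conditions \emph{pin down} the splitting type rather than merely bounding it — and making sure the inequalities $b - a \ge 2$, $c - b \ge 2$ are used exactly where needed (they are what make the ``middle'' and ``outer'' contributions to the relevant $h^0$ and $h^1$ vanish, so that the rank-$1$ conditions are sharp). Everything else is a routine translation between $f_*L$, twists of $L$ on $C$, Serre duality, and the degree formula from \eqref{chi}, all of which are already in hand.
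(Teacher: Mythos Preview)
Your approach is the same as the paper's: translate ``$f_*L \cong \O(a,b,c)$'' into the two equalities $h^0(L(-cH)) = 1$ and $h^1(L((-a-2)H)) = 1$ via the projection formula, then Serre-dualize the second and use $d = a+b+c+g+2$ to identify the degree of $D$. The paper's proof is terser --- it writes out only the forward direction and treats the equivalence of the exact splitting type with the pair of $h^i = 1$ conditions as understood.

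Your reverse inclusion is more explicit than the paper's, but the final bookkeeping step has a slip. From $e_1' = a$, $e_3' \le c$, $e_2' + e_3' = b+c$, and $e_2' \le e_3'$ you conclude $e_3' = c$; this does not follow, since $(e_2', e_3') = (b+1, c-1)$ satisfies all four constraints whenever $c - b \ge 2$ (which is exactly your hypothesis). The repair is immediate: you also have $e_3' \ge c$, either because $h^0(L(-cH)) \ge 1$ forces the top summand $\max\{0, e_3' - c + 1\}$ to be positive, or because you already placed $L \in \Sigmabar_{(a,b,c)}$, so $\vec e' \le (a,b,c)$ in the specialization order, giving $e_1' + e_2' \le a+b$ and hence $e_3' \ge c$. (The same remark applies to your ``hence $e_1' = a$'': you derived only $e_1' \ge a$; the bound $e_1' \le a$ comes from $h^1 \ge 1$ or from the specialization order.) With this one-line fix the argument is complete and indeed more detailed than the paper's own.
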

\begin{proof}
Suppose $L \in \Sigma$. 
The condition $1 = h^0(\pp^1, (f_*L)(-c)) = h^0(C, L(-cH))$ is equivalent to $L = \O(D + cH)$ for some effective divisor $D$ with $h^0(C, \O(D)) = 1$ and
\[\deg D = \deg L - 3c = g+2 + a+ b - 2c.\]
The other defining condition of $\Sigma_{(a, b, c)}$ gives
\begin{align*}
1 &= h^1(C, (f_*L)(-a - 2)) = h^1(C, L((-a - 2)H))  \\
&= h^0(C, K( - L + (a + 2)H)) = h^0(K(-(D + cH) + (a + 2) H)). \qedhere
\end{align*}
\end{proof}

To study the geometry of $\Sigma$, we introduce the variety
\begin{equation} \label{phi}
\Phi := \{(D, D') : D + D' \sim K - (c - a - 2)H\} \subset \Sym^{g+2 + a + b - 2c}(C) \times \Sym^{g + 2 + 2a - b - c}(C) 
\end{equation}
and the open subvariety
\begin{equation} \label{phic}
\Phi^\circ =\{(D, D') \in \Phi : h^0(C, \O(D)) = h^0(C, \O(D')) = 1\}.
\end{equation}
By Lemma \ref{point}, $\Sigma$ is isomorphic to $\Phi^\circ$ under the map $(D, D') \mapsto \O(D + cH)$. 

The key to understanding $\Phi$ is to realize $K - (c - a - 2)H$ as the restriction of a linear series on a Hirzebruch surface containing $C$.
We do this in Lemma \ref{reslem}.
This allows us to compute $\dim \Phi = h^0(C, K - (c - a -2)H) - 1$ in Lemma \ref{dim}.
Then in Lemma \ref{bign} and Proposition \ref{smalln}, we determine the conditions on $(a, b, c)$ such that $\Phi^\circ$ is non-empty and describe its irreducible components.


\section{Trigonal curves on Hirzebruch surfaces}

Suppose $f: C \rightarrow \pp^1$ is a smooth trigonal curve. Let $\iota: C \rightarrow \pp^{g-1}$ denote the canoical embedding. For each $p \in \pp^1$,
Geometric Riemann-Roch (see e.g. \cite[p.~12]{ACGH}) shows that $\iota(f^{-1}(p))$ is contained in a line. It follows that the image of $(f \times \iota): C \rightarrow \pp^1 \times \pp^{g-1}$ lies inside a $\pp^1$ bundle over $\pp^1$. This surface has the form $\ff_n := \pp(\O_{\pp^1} \oplus \O_{\pp^1}(n))$ for some $n \geq 0$, which is defined to be the \textit{Maroni invariant} of $f: C \rightarrow \pp^1$. We denote by $\pi$ the projection $\ff_n \rightarrow \pp^1$.

The Maroni invariant can also be described as follows: $f$ gives rise to an exact sequence
\[0 \rightarrow V \rightarrow f_* \omega_C \rightarrow \omega_{\pp^1} \rightarrow 0\]
where $V$ is a rank $2$, degree $g -1$ vector bundle. We may write $V = \O_{
\pp^1}\left(\frac{g-n}{2} - 1\right) \oplus \O_{\pp^1}\left(\frac{g + n}{2} - 1\right)$ for some $n$, which is the Maroni invariant.
The composition $f^*V \rightarrow f^*f_*\omega_C \rightarrow \omega_C$ induces a map $C \rightarrow \pp V \cong \ff_n$ factoring both $\iota$ and $f$. Explicitly, the line bundle $\O_{\pp V}(1)$ maps $\pp V$ into $\pp^{g-1}$ as a rational scroll, obtained as the union of lines joining corresponding points of rational normal curves of degrees $\frac{g - n}{2} - 1$ and $\frac{g+n}{2} - 1$ in complementary linear spaces.

\begin{multicols}{2}
\begin{center}
\begin{tikzcd}
&&& \pp^{g-1} \\
&C \arrow{r} \arrow[bend left = 20]{urr}{\iota} \arrow{dr}[swap]{f} & \pp V \arrow{d}{\pi} \arrow{ur}[swap]{|\O_{\pp V}(1)|} & \\
&& \pp^1
\end{tikzcd}
\end{center}

\begin{center}
\includegraphics[width=2.1in]{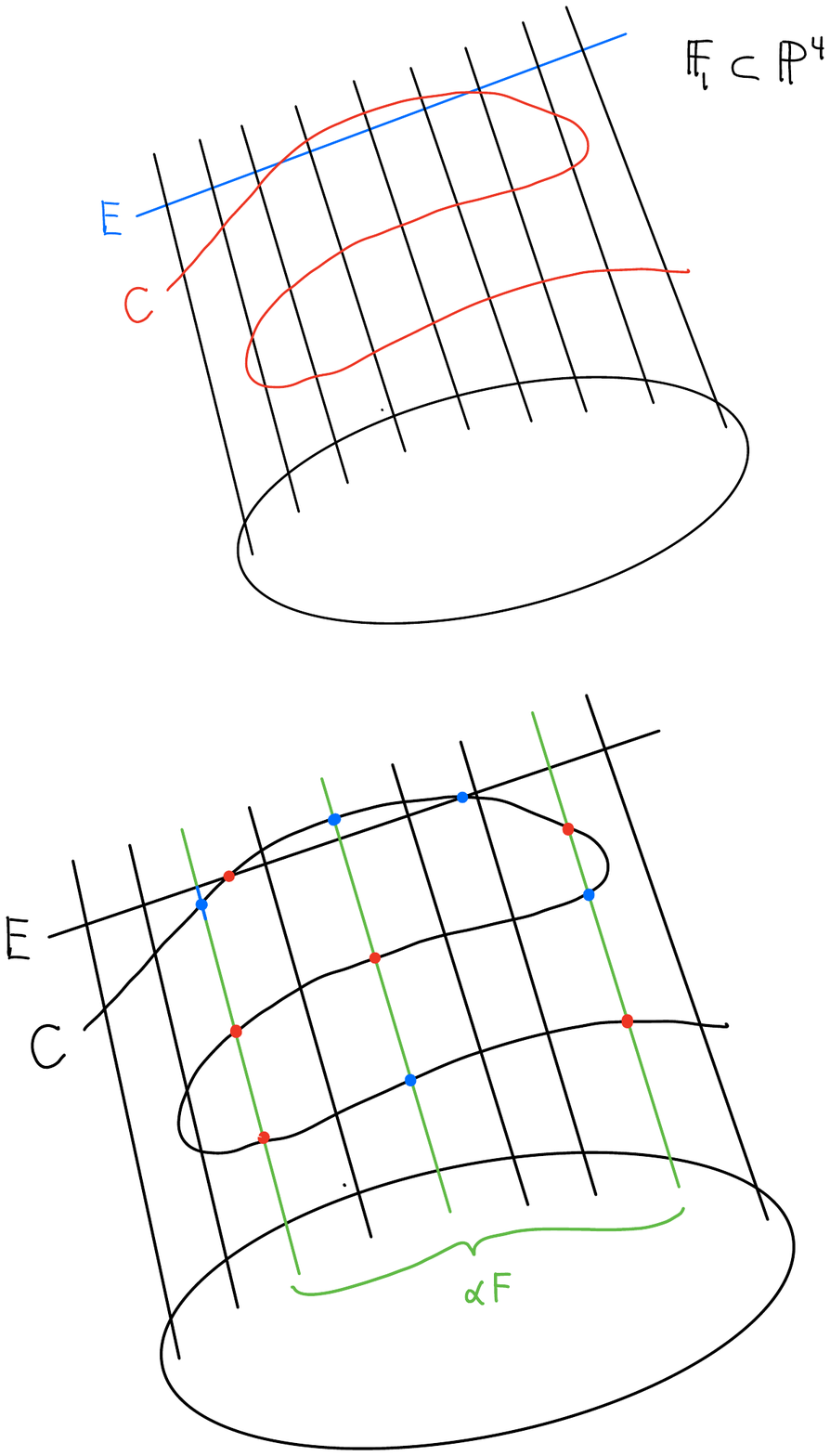}
\end{center}
 \end{multicols}
 
We recall some basic facts about divisors on Hirzebruch surfaces.
Let $E \subset \ff_n$ be the previously mentioned rational curve of degree $\frac{g - n}{2} - 1$, called the \textit{directrix}. Note that $\pi_*\O_{\ff_n}(E) = \O_{\pp^1} \oplus \O_{\pp^1}(-n)$.
Let $F \subset \ff_n$ be $\pi^{-1}(p)$ for a general point $p \in \pp^1$. The classes of $E$ and $F$ generate $A^*(\ff_n)$. By slight abuse of notation, we use the same letters for the classes of these curves. The intersection numbers are
\[E^2 = -n \qquad E.F = 1 \qquad F^2 = 0.\]
If $n > 0$, then the directrix $E$ is also characterized as the unique curve of self-intersection $-n$.

Throughout this section, we fix $a \leq b \leq c$ with $b - a, c - b \geq 2$.
The following lemma expresses the linear system $K - (c - a - 2)H$ that we are interested in as a restriction of a certain combination of $E$ and $F$.

\begin{Lemma} \label{reslem}
Let $C \subset \ff_n$ be a trigonal curve of Maroni invariant $n$.
Let $K$ be the canonical divisor on $C$ and $H$ the trigonal class. Then
\[ K - (c - a - 2)H = \left.\left(E + \left(\frac{g+n}{2} + 1 - c + a\right)F\right)\right|_C\]
Moreover,
\[ [C] = 3E +\left( \frac{g + 3n}{2} + 1 \right) F.\]
\end{Lemma}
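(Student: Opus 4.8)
The plan is to compute both classes by intersection theory on $\ff_n$, using the characterization of $E$ and $F$ and the two numerical invariants we already know: the degree of $C \to \pp^1$ (which is $3$) and the genus of $C$ (which is $g$). First I would determine the class of $C$. Write $[C] = 3E + mF$ for some integer $m$; the coefficient of $E$ must be $3$ because $[C] \cdot F = \deg(f) = 3$ and $E \cdot F = 1$, $F^2 = 0$. To pin down $m$, I would use the fact that $C$ meets the directrix $E$ in exactly the points lying on the rational normal curve of degree $\frac{g-n}{2}-1$; more robustly, I would use adjunction. We have $K_{\ff_n} = -2E - (n+2)F$ (the standard canonical class of a Hirzebruch surface, derivable from $\pi_*\O(E) = \O \oplus \O(-n)$ and the relative Euler sequence), so by adjunction $2g - 2 = C \cdot (C + K_{\ff_n})$. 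Expanding $C \cdot (C + K_{\ff_n}) = (3E + mF)\cdot(E + (m-n-2)F)$ using $E^2 = -n$, $E\cdot F = 1$, $F^2 = 0$ gives a linear equation in $m$, which I expect to solve to $m = \frac{g+3n}{2} + 1$, matching the claim. (One should double-check the integrality/parity: $g + n$ is even since $\frac{g\pm n}{2}$ appear as bundle degrees, so $\frac{g+3n}{2} = \frac{g+n}{2} + n$ is an integer.)

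Next I would identify the restriction $H|_C$ and $K|_C$ in terms of $E|_C$ and $F|_C$. By definition $H = f^*\O_{\pp^1}(1)$ is the fiber class pulled back, so $H = F|_C$. For the canonical class, adjunction gives $K_C = (K_{\ff_n} + C)|_C = \big((3E + mF) + (-2E - (n+2)F)\big)|_C = \big(E + (m - n - 2)F\big)|_C = \big(E + (\frac{g+n}{2} - 1)F\big)|_C$, using the value of $m$ just found. Therefore
\[
K - (c - a - 2)H = \left(E + \left(\tfrac{g+n}{2} - 1\right)F\right)\Big|_C - (c - a - 2)\,F|_C = \left(E + \left(\tfrac{g+n}{2} + 1 - c + a\right)F\right)\Big|_C,
\]
which is exactly the asserted identity. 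So the two displayed formulas really come from the single computation of $[C]$ plus adjunction.

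The main obstacle is making sure the intersection-theoretic bookkeeping on $\ff_n$ is airtight, in particular getting $K_{\ff_n}$ right and checking that the directrix $E$ used here (the degree $\frac{g-n}{2}-1$ scroll curve) is indeed the section with $E^2 = -n$ and $\pi_*\O(E) = \O \oplus \O(-n)$ as asserted in the text — when $n = 0$ there is no unique such curve, so I would phrase the argument so it only uses the numerical relations $E^2 = -n$, $E\cdot F = 1$, $F^2 = 0$ and the identification $H = F|_C$, which hold uniformly. A secondary point is confirming that $C$ is irreducible and reduced in its class (so adjunction applies with arithmetic genus equal to $g$); this follows since $C$ is a smooth curve mapping birationally to its image in $\ff_n$ via the scroll construction recalled just above. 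With these checks in place, everything else is the short linear algebra sketched above.
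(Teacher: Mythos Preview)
Your argument is correct. The paper takes a slightly different route: it first identifies $K_C$ directly from the geometry of the canonical embedding, observing that a hyperplane in $\pp^{g-1}$ containing the directrix shows $\O_{\pp V}(1) = \O_{\ff_n}\big(E + (\tfrac{g+n}{2}-1)F\big)$, hence $K = \big(E + (\tfrac{g+n}{2}-1)F\big)\big|_C$; the first displayed formula follows immediately, and then the class $[C]$ is determined by intersecting this known expression for $K$ with $[C] = 3E + \beta F$ and setting the result equal to $2g-2$. You instead compute $[C]$ first via adjunction with the standard formula $K_{\ff_n} = -2E - (n+2)F$, and then apply adjunction a second time to recover $K_C$. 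The two computations are essentially dual to each other and involve the same intersection numbers; the paper's approach has the mild advantage of bypassing any need to recall $K_{\ff_n}$, since the scroll description of the canonical map hands you $K_C$ for free, while your approach is more self-contained in that it uses only the numerical relations on $\ff_n$ and does not appeal to the projective geometry of the scroll. Either way the logic is sound, and your caveats about $n=0$ and the smoothness of $C$ in $\ff_n$ are appropriately handled.
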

\begin{proof}
Considering a hyperplane in $\pp^{g-1}$ containing the directrix, we see that $\O_{\pp V}(1) = \O_{\ff_n}(E + (\frac{g+n}{2} - 1)F)$, so $K = (E + (\frac{g+n}{2} - 1)F)|_C$. 
Since $H = F|_C$, the first claim follows.
Now suppose $[C] = 3E + \beta F$ for some integer $\beta$. Setting
\begin{align*}
2g - 2 &= \deg K = (E + (\tfrac{g+n}{2} - 1)F) . C \\
&=(E + (\tfrac{g+n}{2} - 1)F).(3E + \beta F) \\
&= -3n + \beta + 3(\tfrac{g + n}{2} - 1),
\end{align*}
we conclude $\beta = \frac{g + 3n}{2} + 1$ as desired.
\end{proof}

To describe the geometry of this linear system, we require a cohomology vanishing statement for non-negative combinations of $E$ and $F$.
\begin{Lemma} \label{cohvan} 
If $m \geq \ell n \geq 0$, then $h^1(\ff_n, \O_{\ff_n}(\ell E + m F)) =0$.
\end{Lemma}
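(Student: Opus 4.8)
The plan is to reduce the vanishing of $h^1(\ff_n, \O_{\ff_n}(\ell E + mF))$ to cohomology on the base $\pp^1$ via the projection $\pi : \ff_n \to \pp^1$ and the projection (Leray) spectral sequence, then identify the relevant pushforward as a symmetric power of the bundle $\O_{\pp^1} \oplus \O_{\pp^1}(n)$ twisted by $\O_{\pp^1}(m)$. First I would recall that $\ff_n = \pp(\O_{\pp^1} \oplus \O_{\pp^1}(n))$, that $\O_{\ff_n}(E)$ is the relative $\O(1)$ for this projectivization (we have $\pi_*\O_{\ff_n}(E) = \O_{\pp^1} \oplus \O_{\pp^1}(-n)$ as already noted in the text — or, with the opposite normalization convention, $\O_{\pp^1} \oplus \O_{\pp^1}(n)$; I will fix whichever convention makes $E$ effective and match the intersection numbers $E^2 = -n$, $E.F = 1$), and that $\O_{\ff_n}(F) = \pi^*\O_{\pp^1}(1)$. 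The key input is the fiberwise computation: $R^1\pi_* \O_{\ff_n}(\ell E) = 0$ for $\ell \geq 0$ (the restriction to each fiber $\pp^1$ is $\O_{\pp^1}(\ell)$, which has no higher cohomology), and $\pi_* \O_{\ff_n}(\ell E) = \Sym^{\ell}(\O_{\pp^1} \oplus \O_{\pp^1}(n))^{\vee}$ up to a twist — concretely a direct sum $\bigoplus_{i=0}^{\ell} \O_{\pp^1}(-in)$ or $\bigoplus_{i=0}^{\ell}\O_{\pp^1}(in)$ depending on normalization.

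The main steps, in order, are: (1) apply the projection formula and the above to get $\pi_*\O_{\ff_n}(\ell E + mF) = \big(\bigoplus_{i=0}^{\ell} \O_{\pp^1}(\epsilon_i)\big) \otimes \O_{\pp^1}(m)$ for suitable exponents $\epsilon_i$ with $-\ell n \leq \epsilon_i \leq 0$ (in the normalization where $E$ is the directrix, the extreme values are $\epsilon_0 = 0$ and $\epsilon_\ell = -\ell n$, interpolating linearly); (2) invoke $R^1\pi_*\O_{\ff_n}(\ell E + mF) = 0$, which holds since $\ell \geq 0$ so the fibers contribute nothing in degree one; (3) conclude from the Leray spectral sequence (which degenerates here because $R^1\pi_* = 0$) that $H^1(\ff_n, \O_{\ff_n}(\ell E + mF)) \cong H^1(\pp^1, \pi_*\O_{\ff_n}(\ell E + mF))$; (4) observe that $H^1(\pp^1, \O_{\pp^1}(t)) = 0$ as soon as $t \geq -1$, so it suffices that $m + \epsilon_i \geq -1$ for every $i$, and since the smallest $\epsilon_i$ is $-\ell n$, the hypothesis $m \geq \ell n$ gives $m + \epsilon_i \geq m - \ell n \geq 0 > -1$, finishing the proof.

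The step I expect to require the most care is (1): getting the exponents in the splitting of $\pi_*\O_{\ff_n}(\ell E + mF)$ right, which amounts to pinning down the normalization convention for $\O_{\ff_n}(E)$ versus $\O_{\pp(\mathcal{E})}(1)$ and then computing $\Sym^\ell$ of a rank-two split bundle on $\pp^1$. This is a bookkeeping point rather than a conceptual obstacle, and the sign/direction can be cross-checked against the known intersection numbers $E^2 = -n$, $E.F = 1$, $F^2 = 0$ and against the stated identity $\pi_*\O_{\ff_n}(E) = \O_{\pp^1} \oplus \O_{\pp^1}(-n)$. One could alternatively avoid the spectral sequence entirely by an explicit Čech computation on the standard affine cover of $\ff_n$, but the pushforward argument is cleaner. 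A final sanity check: when $n = 0$, $\ff_0 = \pp^1 \times \pp^1$ and the statement reduces to $h^1(\pp^1\times\pp^1, \O(m) \boxtimes \O(\ell)) = 0$ for $m, \ell \geq 0$, which is Künneth.
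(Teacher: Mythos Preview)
Your argument is correct and takes a genuinely different route from the paper's. The paper proves the vanishing by induction on two short exact sequences: first the restriction sequence
\[0 \to \O_{\ff_n}((\ell-1)E + mF) \to \O_{\ff_n}(\ell E + mF) \to \O_E(m - n\ell) \to 0,\]
using the pushforward computation (the same $\bigoplus_{i=0}^{\ell}\O_{\pp^1}(m-in)$ that you identify) only to count $h^0$ and deduce surjectivity on global sections, thereby reducing $h^1$ at level $\ell$ to level $\ell-1$; then a second induction peels off copies of $F$ down to $h^1(\ff_n,\O_{\ff_n}) = 0$. You instead apply Leray directly: since $R^1\pi_*$ vanishes (restriction to each fiber is $\O_{\pp^1}(\ell)$ with $\ell \geq 0$), $H^1$ on $\ff_n$ equals $H^1$ of the pushforward on $\pp^1$, and every summand $\O_{\pp^1}(m-in)$ has degree $\geq m - \ell n \geq 0$. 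Your approach is shorter and more conceptual; the paper's is more hands-on and avoids invoking the spectral sequence explicitly. Both hinge on the same splitting of $\pi_*\O_{\ff_n}(\ell E + mF)$, so the ``bookkeeping'' step (1) you flag is indeed the only thing to pin down, and the paper's stated identity $\pi_*\O_{\ff_n}(E) = \O_{\pp^1} \oplus \O_{\pp^1}(-n)$ fixes the convention for you. One small remark: both arguments tacitly use $\ell \geq 0$; when $n > 0$ this is forced by $\ell n \geq 0$, and when $n = 0$ it should be read as part of the hypothesis (as your Künneth sanity check already assumes).
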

\begin{proof} First consider the sequence
\begin{equation} \label{inE}
0 \rightarrow \O_{\ff_n}((\ell - 1)E + m F) \rightarrow \O_{\ff_n}(\ell E + mF) \rightarrow \O_{E}(m - n \ell) \rightarrow 0.
\end{equation}
Pushing forward by $\pi$, we compute
\begin{align*}
h^0(\ff_n, \O_{\ff_n}(\ell E + mF)) &= h^0(\pp^1, \O_{\pp^1}(m) \otimes \mathrm{Sym}^\ell(\O \oplus \O(-n)))\\
&= (m + 1) + (m - n + 1) + \ldots + (m - \ell n + 1).
\end{align*}
If $m \geq n \ell$, it follows that \eqref{inE} is surjective on global sections. Since $h^1(E, \O_E(m - n \ell)) = 0$, we must have
\[h^1(\ff_n, \O_{\ff_n}(\ell E + m F)) = h^1(\ff_n, \O_{\ff_n}((\ell-1) E + m F)) = \ldots =  h^1(\ff_n, \O_{\ff_n}(mF)),\]
by inducting on $\ell$. Next consider the short exact sequence
\[0 \rightarrow \O_{\ff_n}((m-1) F) \rightarrow \O_{\ff_n}(mF) \rightarrow \O_F \rightarrow 0.\]
A similar argument implies that for $m \geq 0$,
\[h^1(\ff_n, \O_{\ff_n}(m F) = h^1(\ff_n, \O_{\ff_n}((m-1)F)) = \ldots = h^1(\ff_n, \O_{\ff_n}) = 0. \qedhere\]
\end{proof}

One consequence of the above is a description of how general $C$ meet the directrix.
\begin{Lemma} \label{a0}
Let $C \subset \ff_n$ be a general curve of class $3E + (\frac{g+3n}{2} + 1)F$. Then $E \cap C$ consists of $\frac{g - 3n}{2} + 1$ distinct, reduced points.
\end{Lemma}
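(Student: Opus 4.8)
The plan is to read off $E\cap C$ from the restriction to the directrix of the linear system that cuts out $C$. Write $\beta=\tfrac{g+3n}{2}+1$, so $C\in|3E+\beta F|$. First I would record the intersection number
\[C\cdot E=(3E+\beta F)\cdot E=-3n+\beta=\tfrac{g-3n}{2}+1,\]
which is non-negative: $E$ is not a component of the irreducible curve $C$ (equivalently, this is the bound $n\le\tfrac{g+2}{3}$ on the Maroni invariant of a genus $g$ trigonal curve). Tensoring the ideal sequence of $E\subset\ff_n$ with $\O_{\ff_n}(3E+\beta F)$ gives the exact sequence
\[0\to\O_{\ff_n}(2E+\beta F)\to\O_{\ff_n}(3E+\beta F)\to\O_E\!\left(\tfrac{g-3n}{2}+1\right)\to 0,\]
where I have identified $E\cong\pp^1$ and $\O_{\ff_n}(3E+\beta F)|_E\cong\O_{\pp^1}(\tfrac{g-3n}{2}+1)$.

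Next I would apply Lemma \ref{cohvan} with $\ell=2$ and $m=\beta$. The hypothesis $m\ge\ell n$ becomes $\tfrac{g+3n}{2}+1\ge 2n$, i.e. $\tfrac{g-n}{2}+1\ge 0$, which holds; hence $h^1(\ff_n,\O_{\ff_n}(2E+\beta F))=0$ and the restriction map $H^0(\ff_n,\O_{\ff_n}(3E+\beta F))\to H^0(\pp^1,\O_{\pp^1}(\tfrac{g-3n}{2}+1))$ is surjective. Being a surjective linear map, it induces a dominant rational map of projective spaces $|3E+\beta F|\dashrightarrow|\O_{\pp^1}(\tfrac{g-3n}{2}+1)|$, so a general member $C$ of $|3E+\beta F|$ restricts to a general effective divisor of degree $\tfrac{g-3n}{2}+1$ on $E$. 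A general such divisor is reduced, i.e. consists of $\tfrac{g-3n}{2}+1$ distinct points (and is empty when the degree is $0$). In particular a general $C$ does not contain $E$, so $C\cap E$ equals this divisor as a subscheme of $E$; since its length equals $C\cdot E=\tfrac{g-3n}{2}+1$, the $\tfrac{g-3n}{2}+1$ points are exactly the intersection points of $C$ and $E$, each reduced, and $C$ meets $E$ transversally at each of them.

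Once Lemma \ref{cohvan} is available the argument is essentially formal, so there is no serious obstacle; the only things to check are that $2E+\beta F$ lands in the range where that lemma applies (it does, with room to spare) and the standard fact that a genericity statement on $\pp^1$ transfers through a surjective restriction map on global sections. As a byproduct, since $|3E+\beta F|$ is base-point-free (as $3\ge 0$ and $\beta\ge 3n$), Bertini's theorem shows the general member is smooth, so the phrase ``general curve of class $3E+(\tfrac{g+3n}{2}+1)F$'' is unambiguous and such a curve has genus $g$ by adjunction.
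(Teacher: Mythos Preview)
Your argument is correct and is essentially the same as the paper's own proof: both reduce to surjectivity of the restriction map $H^0(\ff_n,\O_{\ff_n}(3E+\beta F))\to H^0(E,\O_E(\tfrac{g-3n}{2}+1))$, established via $h^1(\ff_n,\O_{\ff_n}(2E+\beta F))=0$ from Lemma~\ref{cohvan}. You have simply spelled out more of the details (the intersection computation, the verification of the hypothesis $m\ge\ell n$, and the passage from surjectivity to genericity of the restricted divisor) than the paper does.
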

\begin{proof}
The claim will follows from showing that the restriction map 
\[\O_{\ff_n}(C) \rightarrow \O_{E}(C) \cong \O_{\pp^1}(\tfrac{g - 3n}{2}+1)\]
is surjective on global sections. This follows because the kernel $\O_{\ff_n}(C - E) = \O_{\ff_n}(2E + (\frac{g+3n}{2} + 1)F)$ has no $H^1$ by Lemma \ref{cohvan}.
\end{proof}

We now determine the dimension and base locus of the linear systems of interest.
\begin{Lemma} \label{dim}
Let $C \subset \ff_n$ be a curve of class $3E + (\frac{g+3n}{2} + 1)F$.
For $\alpha \leq \frac{g+n}{2}$, we have
\[h^0(C, \O_C(E+\alpha F))= \begin{cases} 0 & \text{if $\alpha < 0$} \\
\alpha + 1 & \text{if $0 \leq \alpha \leq n-1$} \\
2\alpha -n + 2 & \text{if $\alpha \geq n $}
 \end{cases} \]
 If $0 \leq \alpha \leq n-1$, then $E|_C$ is the base locus of $(E + \alpha F)|_C$. If $\alpha \geq n$, then $(E + \alpha F)|_C$ is base point free.
\end{Lemma}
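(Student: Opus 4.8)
The plan is to compute $h^0(C, \O_C(E + \alpha F))$ by restricting from the ambient Hirzebruch surface $\ff_n$ and controlling the error term with the cohomology vanishing of Lemma \ref{cohvan}. First I would write down the restriction sequence
\[
0 \to \O_{\ff_n}(E + \alpha F - C) \to \O_{\ff_n}(E + \alpha F) \to \O_C(E + \alpha F) \to 0,
\]
where $C$ has class $3E + (\tfrac{g+3n}{2} + 1)F$, so $E + \alpha F - C = -2E + (\alpha - \tfrac{g+3n}{2} - 1)F$. Serre duality on $\ff_n$ (using $K_{\ff_n} = -2E - (n+2)F$) converts $h^i$ of this negative class into $h^{2-i}$ of $2E + (\tfrac{g+3n}{2} - 1 - \alpha - n + 2)F$... more cleanly, I would just observe that for $\alpha \leq \tfrac{g+n}{2}$ the twisting coefficient makes $H^0(\ff_n, E + \alpha F - C) = 0$ (the class has negative $E$-coefficient and insufficiently positive $F$-coefficient), and that $H^2(\ff_n, E + \alpha F) = 0$ since $E + \alpha F$ has non-negative intersection with a moving curve. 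Then the long exact sequence reads $0 \to H^0(\ff_n, E+\alpha F) \to H^0(C, \O_C(E+\alpha F)) \to H^1(\ff_n, E + \alpha F - C) \to H^1(\ff_n, E + \alpha F)$, and I would show the connecting map is an isomorphism onto $H^1(\ff_n, E+\alpha F - C)$, i.e. that $H^1(\ff_n, E + \alpha F)$ vanishes — this is exactly Lemma \ref{cohvan} applied with $\ell = 1$, $m = \alpha$, valid precisely when $\alpha \geq n$.

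So the computation splits into the two regimes flagged in the statement. When $\alpha \geq n$, Lemma \ref{cohvan} gives $H^1(\ff_n, E + \alpha F) = 0$, hence $h^0(C, \O_C(E+\alpha F)) = h^0(\ff_n, E+\alpha F) + h^1(\ff_n, E+\alpha F - C)$. The first term I compute via push-forward as in the proof of Lemma \ref{cohvan}: $\pi_*\O_{\ff_n}(E + \alpha F) = \O(\alpha) \oplus \O(\alpha - n)$, giving $h^0 = (\alpha+1) + (\alpha - n + 1) = 2\alpha - n + 2$ when $\alpha \geq n$ (the second summand contributing nothing when $0 \le \alpha \le n-1$, which explains the drop to $\alpha + 1$ there). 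For the second term I use Serre duality on $\ff_n$: $H^1(\ff_n, E + \alpha F - C) \cong H^1(\ff_n, K_{\ff_n} - E - \alpha F + C)$, and since $K_{\ff_n} = -2E - (n+2)F$ this is $H^1(\ff_n, E + (\tfrac{g+3n}{2} + 1 - \alpha - n - 2)F) = H^1(\ff_n, E + (\tfrac{g+n}{2} - \alpha - 1)F)$; by the hypothesis $\alpha \le \tfrac{g+n}{2}$ the coefficient $\tfrac{g+n}{2} - \alpha - 1 \geq -1$, and one checks this $H^1$ vanishes for all such $\alpha$ — either directly from $\pi_*$ and $R^1\pi_*$ computations, or by noting it is the $H^1$ of a line bundle restricted along the directrix sequence. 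When $\alpha < 0$: $(E + \alpha F)|_C$ has negative degree against every component of a moving member, or one argues $h^0(\ff_n, E + \alpha F) = 0$ and the obstruction term still vanishes, forcing $h^0(C, \cdot) = 0$.

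For the base locus statements, in the range $0 \le \alpha \le n - 1$ I note that the restriction sequence $0 \to \O_{\ff_n}(\alpha F) \to \O_{\ff_n}(E + \alpha F) \to \O_E(\alpha - n) \to 0$ has $h^0(\O_E(\alpha - n)) = 0$ (since $\alpha - n < 0$), so every section of $E + \alpha F$ on $\ff_n$ vanishes on $E$; restricting, every section of $\O_C(E+\alpha F)$ vanishes on $E \cap C$, which is a divisor of the claimed degree $\tfrac{g-3n}{2}+1$ by the computation in Lemma \ref{a0} (or by $E.C = -3n + \tfrac{g+3n}{2}+1 = \tfrac{g-3n}{2}+1$). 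Conversely the sections of $\O_{\ff_n}(\alpha F) = \pi^*\O_{\pp^1}(\alpha)$ separate points off $E$, so $E|_C$ is exactly the base locus. When $\alpha \geq n$, I would show base-point-freeness by verifying $\O_{\ff_n}(E + \alpha F)$ is globally generated (true for $\alpha \geq n$: it's nef and the section ring is generated since $\alpha \ge n$ makes both summands of $\pi_*$ non-negative, so the evaluation is surjective away from $E$, and the $\O_E$-quotient $\O_E(\alpha - n)$ is globally generated for $\alpha \ge n$) and then noting that restriction of a globally generated sheaf to $C$ followed by surjectivity of $H^0(\ff_n, E+\alpha F) \to H^0(C, \O_C(E+\alpha F))$ in this range (which we established above) transfers base-point-freeness to $C$. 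The main obstacle I anticipate is the careful bookkeeping of the two cohomology vanishing claims on $\ff_n$ — precisely pinning down that $H^1(\ff_n, E + (\tfrac{g+n}{2} - \alpha - 1)F) = 0$ for the borderline values of $\alpha$ and that the connecting homomorphism is surjective — since the numerics are tight exactly at $\alpha = n$ and $\alpha = \tfrac{g+n}{2}$, the endpoints where the formula changes behavior.
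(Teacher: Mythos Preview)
Your approach is correct but genuinely different from the paper's. The paper argues via Geometric Riemann--Roch in the canonical model $C \subset \pp^{g-1}$: it tracks how the dimension of the linear span of the divisor $(E+\alpha F)\cap C$ grows as one adds trigonal fibers, distinguishing the regimes $\alpha < n$ and $\alpha \geq n$ by whether the span has filled out the span of the directrix. You instead restrict from $\ff_n$ and reduce everything to the single fact that $H^0(\ff_n, E+\alpha F) \to H^0(C, \O_C(E+\alpha F))$ is an isomorphism for $\alpha \leq \tfrac{g+n}{2}$, after which the formula is immediate from $\pi_*\O_{\ff_n}(E+\alpha F) = \O(\alpha)\oplus\O(\alpha-n)$. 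Your cohomological route is arguably cleaner for the dimension count and reuses Lemma~\ref{cohvan} directly; the paper's projective picture makes the base-locus claims more visual but requires more case-by-case geometric reasoning.

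Two small repairs. First, your Serre-duality step has an arithmetic slip: the $E$-coefficient of $K_{\ff_n} - (E+\alpha F) + C$ is $-2-1+3=0$, so the dual class is $\bigl(\tfrac{g+n}{2}-1-\alpha\bigr)F$ with no $E$-term. This matters at the endpoint $\alpha = \tfrac{g+n}{2}$: your version $E - F$ has $h^1 = n$, whereas the correct class $-F$ has $h^1 = 0$. Second, once you know $H^1(\ff_n, E+\alpha F - C) = 0$ for all $\alpha \leq \tfrac{g+n}{2}$ (which the corrected duality gives), the restriction on $H^0$ is already an isomorphism in every case, so you never need $H^1(\ff_n, E+\alpha F)=0$; the split into ``$\alpha \geq n$'' versus ``$\alpha < n$'' is only relevant for reading off which summand of $\pi_*$ contributes, not for the exactness argument.
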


\begin{proof}
Geometric Riemann-Roch (see e.g. \cite[p.~12]{ACGH}) says $h^0(C, \O_C(D)) - 1$ is the dimension of the span of $D$ in $\pp^{g-1}$ (under the canonical embedding) minus $\deg D -1$. 
Moreover, $p$ is a base point of $D$ if and only if the linear span of $D - p$ is smaller than the linear span of $D$. 

Suppose $n \geq 1$. The intersection $E \cap C$ is a degree $\frac{g - n}{2} + 1 - n$ divisor on a rational normal curve of degree $\frac{g - n}{2} -1$, so its linear span is the largest possible and $h^0(E, \O_C(E)) = 1$. Each fiber $F$ meets some point of $E$. For $\alpha < n$, each time we add points in the restriction of a fiber, the dimension of the linear span of the points increases by $2$ and the number of points increases by $3$. Thus, $h^0$ increases by $1$. It is clear that removing a point in $E \cap C$ decreases the dimension of the linear span, so this is the base locus.

 For $n \leq \alpha  \leq \frac{g+n}{2}$, the span of $(E + \alpha F) \cap C$ contains the linear span of $E$. Adding another triple of points in a fiber only increases the dimension of the span of points by $1$. Thus, $h^0$ increases by $2$ in this range. In this case, our divisor contains enough fibers meeting $E$ that removing a point in $E \cap C$ does not decrease the linear span.
\end{proof}

Let $\Phi$ and $\Phi^\circ$ be as defined in \eqref{phi} and \eqref{phic}.
The following two lemmas give the conditions when $\Phi^\circ$ is non-empty and describe its components.

\begin{Lemma} \label{bign}
If $\alpha = \frac{g + n}{2} + 1 - c +a \geq n$, then $\Phi^\circ \subset \Phi$ is non-empty. If its dimension is positive, $\Phi$ is irreducible.
\end{Lemma}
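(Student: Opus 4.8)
The plan is to identify $\Phi$ with an incidence variety fibered over a projective space of divisors, so that its structure can be read off from the linear system computations in Lemma \ref{dim}. By Lemma \ref{reslem}, $K - (c-a-2)H = (E + \alpha F)|_C$ with $\alpha = \frac{g+n}{2} + 1 - c + a$, and when $\alpha \geq n$ this restricted linear series is base point free of dimension $2\alpha - n + 2$ by Lemma \ref{dim}. So $\Phi$ sits inside the complete linear system $|(E+\alpha F)|_C|$ via $(D, D') \mapsto D + D'$: concretely, $\Phi$ is the preimage, under the projectivized space of sections, of the locus of divisors that decompose as $D + D'$ with $\deg D = g + 2 + a + b - 2c$. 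First I would set up the two projections $p_1: \Phi \to \Sym^{g+2+a+b-2c}(C)$ and $p_2: \Phi \to \Sym^{g+2+2a-b-c}(C)$, sending $(D,D')$ to $D$ and $D'$ respectively; a point of $\Phi$ is determined by $D$ together with a choice of section of $\O_C(E+\alpha F)$ vanishing on $D$, i.e. the fiber of $p_1$ over $D$ is the linear system $|(E+\alpha F)|_C - D|$.

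The key step is to show $\Phi$ is irreducible (when $\dim \Phi > 0$) by exhibiting it as birational to, or dominated by, an irreducible variety. Here is the cleanest route: project $\Phi$ to the \emph{complete linear system} $\mathbb{P} := \mathbb{P} H^0(C, \O_C(E+\alpha F)) \cong \mathbb{P}^{2\alpha - n + 1}$ via $(D,D') \mapsto D + D'$. The image is the subvariety $\mathbb{P}'$ of effective divisors in this linear system that contain an effective subdivisor of degree $g+2+a+b-2c$; since $\Sym$ of a curve is irreducible, $\mathbb{P}'$ is the image of the irreducible incidence variety $\{(D, E) : D \leq E\} \subset \Sym^{g+2+a+b-2c}(C) \times |(E+\alpha F)|_C|$ under a projection, hence irreducible. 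Then $\Phi \to \mathbb{P}'$ is generically finite (in fact generically injective, since a general divisor in a base-point-free linear series on a curve has a unique decomposition into $D$ and its complement once the degree of $D$ is fixed and $h^0(\O_C(D)) = 1$ generically), so $\Phi$ is irreducible. One must check $\dim \Phi = h^0(C, K - (c-a-2)H) - 1 = 2\alpha - n + 1$, which follows from the fiber dimension count over $p_1$ together with Lemma \ref{dim}, matching the dimension of $\mathbb{P}'$.

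For the non-emptiness of $\Phi^\circ$: since $(E+\alpha F)|_C$ is base point free of positive dimension, a general divisor $G$ in it is reduced, and I would argue that a general choice of subdivisor $D \leq G$ of the prescribed degree satisfies $h^0(C, \O_C(D)) = 1$, and likewise for the complement $D' = G - D$ — this uses that $D$ general of degree $< g$ imposes independent conditions (i.e. $h^0(\O_C(D)) = 1$ for general effective $D$ of degree $\leq g$, a standard fact), provided the relevant degrees are $\geq 0$, which is where the hypotheses $b - a, c - b \geq 2$ and $\alpha \geq n \geq 0$ enter. The main obstacle I anticipate is the base-point-free decomposition argument: one needs the generic decomposition $G = D + D'$ to land in $\Phi^\circ$ and to verify that the map $\Phi \to \mathbb{P}'$ really is generically one-to-one rather than merely finite — equivalently, that for general $(D,D') \in \Phi$ both $D$ and $D'$ move in no linear system, so the partition of $D+D'$ into a fixed-degree piece and its complement is unique. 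This is where base point freeness of $(E+\alpha F)|_C$ (valid precisely because $\alpha \geq n$) is essential, and it is the reason the $\alpha < n$ case in Proposition \ref{smalln} behaves differently.
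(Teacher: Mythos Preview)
Your irreducibility argument has a genuine gap. The map $\Phi \to \mathbb{P} = \mathbb{P}H^0(C, \O_C(E+\alpha F))$ sending $(D,D')\mapsto D+D'$ is \emph{not} generically injective: over a reduced divisor $G$ of degree $N=\deg(E+\alpha F)|_C$, the fiber consists of all $\binom{N}{\deg D}$ ways to split the support of $G$ into a subset of size $\deg D$ and its complement. The condition $h^0(\O_C(D))=1$ does not single out one such subset; generically \emph{all} of them satisfy it. So you have a finite cover of degree $\binom{N}{\deg D}>1$, and irreducibility of the base does not imply irreducibility of the total space. Your attempt to salvage this by first asserting that the incidence variety $\{(D,G):D\leq G\}$ is irreducible is circular, since that incidence variety is exactly $\Phi$.

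What the paper does instead is to analyze the monodromy of this cover. Since $(E+\alpha F)|_C$ is base point free when $\alpha\geq n$, the map $C\to \mathbb{P}V^\vee$ is a morphism, and over the open set $U\subset \mathbb{P}V$ of reduced divisors the Uniform Position Lemma (this is where characteristic zero enters) gives that monodromy acts as the full symmetric group on the $N$ points. Hence it acts transitively on size-$\deg D$ subsets, so $\gamma^{-1}(U)$ is connected; being \'etale over $U$ it is smooth, hence irreducible. This monodromy step is the missing idea. Your non-emptiness argument is also not quite complete: a ``general subdivisor of a general $G$'' is not the same as a general effective divisor on $C$, so the standard fact you cite does not apply directly. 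The paper instead constructs an explicit $(D,D')\in\Phi^\circ$ using points of $E\cap C$ together with pairs of points in fibers, checking that the degree constraint $\deg D,\deg D'\leq \frac{g-n}{2}+\alpha$ (equivalent to $c-b,b-a\geq 1$) is exactly what is needed.
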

\begin{proof}
Suppose $D$ contains $m$ points from $E \cap C$. Then we can add up to $\frac{g - n}{2} - m$ pairs of points in fibers without increasing $h^0$ (as well as a single point from each of the remaining fibers). Thus, we can find $(D, D') \in \Phi^\circ$ so long as 
\[\deg D, \deg D' \leq \frac{g-n}{2} + \alpha \quad \Leftrightarrow \quad 1 \leq c - b, b - a,\]
which holds by assumption.

Let $V = H^0(C, K - (c - a - 2)H)$. 
There is a branched cover $\gamma: \Phi \rightarrow \pp V$ sending $(D, D')$ to a section vanishing on $D + D'$.
Assume $\dim V \geq 2$, so $\dim \Phi \geq 1$. Let $U \subset \pp V$ be the divisors in this linear system where each point has multiplicity $1$. In other words, $U$ is the non-tangent hyperplanes to $C$ under the map $C \rightarrow \pp V^\vee$. \textit{Assuming characteristic zero}, $U$ is a dense open subset and 
the Uniform Position Lemma says that the monodromy group of $\{(p, \Lambda) \in C \times U : p \in \Lambda \} \rightarrow U$ is the full symmetric group. It follows that the monodromy of $\gamma^{-1}(U) \subset \Phi \rightarrow U$ is transitive, so $\gamma^{-1}(U)$ is connected. Since $\gamma$ is \'{e}tale over $U$, we see that $\gamma^{-1}(U)$ is smooth, hence irreducible. 
\end{proof}

\begin{prop} \label{smalln}
Suppose $0 \leq \alpha = \frac{g+n}{2} + 1 - c + a \leq n-1$. We have that $\Phi^\circ$ is non-empty if and only if $c - b, b - a \leq 1 + \frac{g-n}{2}$. In this case, 
$\Phi^\circ$ is connected if its dimension is positive. Moreover, if $C$ is general, then $\Phi^\circ$ has
\[ \sum_{\max\{0, b - a - n\} \leq m \leq \frac{g-n}{2} + 1 + b - c} {\frac{g - 3n}{2} + 1 \choose m}\]
irreducible components, correspond to ways of distributing the base points of $(E + \alpha F)|_C$ between $D$ and $D'$.
\end{prop}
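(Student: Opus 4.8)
The plan is to transfer everything to the incidence variety $\Phi$ of \eqref{phi} and analyze it via the structure of the linear system $|K-(c-a-2)H|$ established in Lemmas \ref{reslem}, \ref{dim} and \ref{a0}, followed by a monodromy computation. Write $\alpha=\frac{g+n}{2}+1-c+a$ and $N=\frac{g-3n}{2}+1$. By Lemma \ref{reslem}, $K-(c-a-2)H=(E+\alpha F)|_C$, and since $0\le\alpha\le n-1$, Lemma \ref{dim} says the linear system $\pp V:=|K-(c-a-2)H|$ has base locus exactly $E|_C$, moving part the pullback system $|\alpha H|$, and dimension $\alpha$; for $C$ general, Lemma \ref{a0} gives $E|_C=p_1+\cdots+p_N$ with the $p_i$ distinct. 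Hence every divisor of $\pp V$ has the form $E|_C+F_{q_1}+\cdots+F_{q_\alpha}$, and the map $\gamma\colon\Phi\to\pp V$ sending $(D,D')$ to the section vanishing on $D+D'$ is a finite surjection from the pure $\alpha$-dimensional $\Phi$, with $\Phi^\circ\subseteq\Phi$ open.

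First I would prove the non-emptiness criterion. If $(D,D')\in\Phi^\circ$, then neither $D$ nor $D'$ contains a full fiber $F_q$ of $f$, since $D\ge F_q$ would force the pencil $\{D-F_q+F_{q'}:q'\in\pp^1\}\subseteq|D|$, contradicting $h^0(\O(D))=1$. Thus each $F_{q_j}$ contributes exactly $1$ or $2$ points to $D$, so $m:=\deg(D\cap E|_C)$ satisfies $0\le m\le N$ and $m+\alpha\le\deg D\le m+2\alpha$. Since \eqref{chi} and Lemma \ref{reslem} give $\deg D=\frac{g-n}{2}+1+\alpha-(c-b)$, these inequalities become $\max\{0,b-a-n\}\le m\le\frac{g-n}{2}+1+b-c$, and such an $m$ exists if and only if $c-b,b-a\le1+\frac{g-n}{2}$; this argument is valid for every $C$, and the converse direction follows by splitting $\alpha$ general fibers in the prescribed way.

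Next, for the component count with $C$ general, I would run a monodromy argument over the dense open $U\subseteq\pp V$ of divisors $E|_C+F_{q_1}+\cdots+F_{q_\alpha}$ with the $q_j$ distinct, disjoint from the branch locus of $f$ and from $f(E|_C)$ (hence reduced), shrinking $U$ so that $\gamma|_{\Phi^\circ}$ is finite étale over it. Over such a divisor the fiber of $\Phi^\circ$ is the set of \emph{admissible} splittings, each recorded by a pair $(S,(T_j)_j)$ with $S=D\cap E|_C\subseteq\{p_1,\dots,p_N\}$, $\varnothing\ne T_j=D\cap F_{q_j}\subsetneq F_{q_j}$, and $|S|+\sum_j|T_j|=\deg D$. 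Every irreducible component of $\Phi^\circ$ has dimension $\alpha=\dim\pp V$ and hence dominates $\pp V$, so the components correspond to the orbits of $\pi_1(U)$ on this fiber. The points $p_i$ are fixed, so $S$ is a monodromy invariant; conversely, for a fixed admissible $S$, braiding the $q_j$ among themselves realizes the group $S_\alpha$ permuting the fibers, and transporting each $q_j$ around the branch points of $f$ realizes the monodromy group $S_3$ of the (simply branched) general trigonal cover on $F_{q_j}$, so the monodromy contains $S_3\wr S_\alpha$ — which acts transitively on the admissible $(T_j)_j$ with $D\cap E|_C=S$, because the shape $\{\,|T_j|\,\}=\{2^{t},1^{\alpha-t}\}$ with $t=\deg D-|S|-\alpha$ is forced, $S_\alpha$ is transitive on which positions carry a $2$, and the $S_3$ factors move each $T_j$ through all subsets of the prescribed size. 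Thus there is exactly one irreducible component per admissible $S$, namely the closure $\overline{\Phi^\circ_S}$ of the locus $\Phi^\circ_S:=\{(D,D')\in\Phi^\circ:D\cap E|_C=S\}$ (the "ways of distributing the base points"), and the number of admissible $S$ is
\[\sum_{\max\{0,\,b-a-n\}\ \le\ m\ \le\ \frac{g-n}{2}+1+b-c}\binom{\tfrac{g-3n}{2}+1}{m},\]
which is the claimed count.

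Finally, for connectedness when $\dim\Phi^\circ=\alpha>0$: either there is a single admissible $S$ (necessarily $S=\varnothing$, giving one component), or the admissible range of $m$ has at least two values, in which case adjacent components link — degenerating one fiber $F_{q_j}$ to the fiber $F_{f(p_i)}$ through a base point $p_i\notin S$, a point lying over $p_i$ can be transferred between $D$ and $D'$, so $\overline{\Phi^\circ_S}$ and $\overline{\Phi^\circ_{S\cup\{p_i\}}}$ share a point which still lies in $\Phi^\circ$; chaining such moves connects all components. I expect the main obstacle to be making the monodromy argument fully rigorous: one must verify that over a general divisor of $\pp V$ the admissible splittings are exactly the points of $\Phi^\circ$ (i.e. $h^0(\O(D))=h^0(\O(D'))=1$ there) and that $\gamma$ is étale over $U$, which rests on a general-position statement for subsets of $C$ under the canonical embedding via geometric Riemann--Roch and the uniform position lemma, together with the degeneration bookkeeping underlying the connectedness clause.
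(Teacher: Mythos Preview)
Your proposal is correct and the overall architecture matches the paper's: both reduce to the base-locus/moving-fiber description of $|K-(c-a-2)H|$ coming from Lemma~\ref{dim}, derive the non-emptiness inequalities from ``neither $D$ nor $D'$ may contain a full trigonal fiber'', index the components by the subset $S\subseteq E\cap C$ of base points landing in $D$, and prove connectedness by specializing a moving fiber to one through a base point. The genuine methodological difference is in showing each $\Phi^\circ_S$ is irreducible. The paper does this by an explicit dominant rational parametrization: once $S$ is fixed, the number of moving fibers contributing two points to $D$ is forced, and recording for each moving fiber the unique point on the ``thin'' side gives a birational map from a product $\Sym^{j}C\times\Sym^{\alpha-j}C$ onto the component. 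You instead run a monodromy argument, showing the action of $\pi_1(U)$ on the fiber of $\gamma$ contains a group of the form $G\wr S_\alpha$ with $G$ the monodromy of $f$, hence transitive on admissible splittings with fixed $S$. Your route is clean and conceptual, and has the minor bonus that simple branching of $f$ is not needed (already $\zz/3$ is transitive on $1$- and $2$-subsets of a $3$-set). The cost is that you must know every admissible splitting over a generic divisor of $\pp V$ actually lies in $\Phi^\circ$, i.e.\ that such $D,D'$ have $h^0=1$; this is precisely the geometric Riemann--Roch / maximal-span verification the paper carries out in its second paragraph, and which you correctly flag as the main ingredient left to supply. Once that is in place, your argument goes through.
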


\begin{proof}
The condition $c - b, b-a \leq 1 + \frac{g - n}{2}$ is equivalent to $\deg D, \deg D' \geq \alpha$.
If $\deg D < \alpha$, then $D'$ contains a full trigonal fiber, so $h^0(C, D') \geq 2$. The same argument holds with $D$ and $D'$ reversed, so these conditions are necessary for $\Phi^\circ$ to be non-empty. (Notice these conditions were automatically satisfied in Lemma \ref{bign} by its first condition.)

Now suppose $\deg D, \deg D' \geq \alpha$.
Then we can find $(D, D') \in \Phi$ so that both $D$ and $D'$ are subsets of a collection of points of the form $E \cap C$ union $\alpha$ pairs of points on trigonal fibers.
We claim that the later has maximal dimensional span. The span of $E \cap C$ has dimension $\frac{g - n}{2} - n$. Now pick $\alpha$ lines of the ruling on $\ff_n$ not meeting $E$ along $E \cap C$. The linear span of $E \cap C$ union these lines is the same as the linear span of $E \cap C$, union the $\alpha$ points where these lines meet $E$, union the $\alpha$ points where they meet the complimentary rational normal curve.
Because $\alpha \leq n - 1$, the collection of points described on $E$ have maximal span. The points of intersection with the complementary rational curve are also all independent, so all together we have a linear span of dimension $\deg(E \cap C) + 2\alpha - 1$, which is the maximum possible.
Any subset of such a collection of points must also have maximum possible span, and hence has $h^0(C, D) = 1$.

\vspace{-.2in}
\begin{multicols}{2}
The irreducible components of $\Phi^\circ$ correspond to the possible ways of distributing the base points of $E \cap C$ between $D$ and $D'$ so that $h^0(C, D) = h^0(C, D') = 1$. Suppose that we place $m$ base points in $D$ and $m' = \frac{g - 3n}{2} +1 - m$ base points in $D'$. Then in the $\alpha$ fibers there 
are $\deg D - m - \alpha$ that should be partitioned $1$ point in $D$ and $2$ points in $D'$; and $\deg D' - m' - \alpha$ that should be partitioned $1$ point in $D'$ and $2$ points in $D$.
These possibilities for the moving fibers are parametrized by $\Sym^{\deg D - m - \alpha} C \times \Sym^{\deg D' - m' - \alpha} C$, which is irreducible.
\begin{center}
\includegraphics[width=3in]{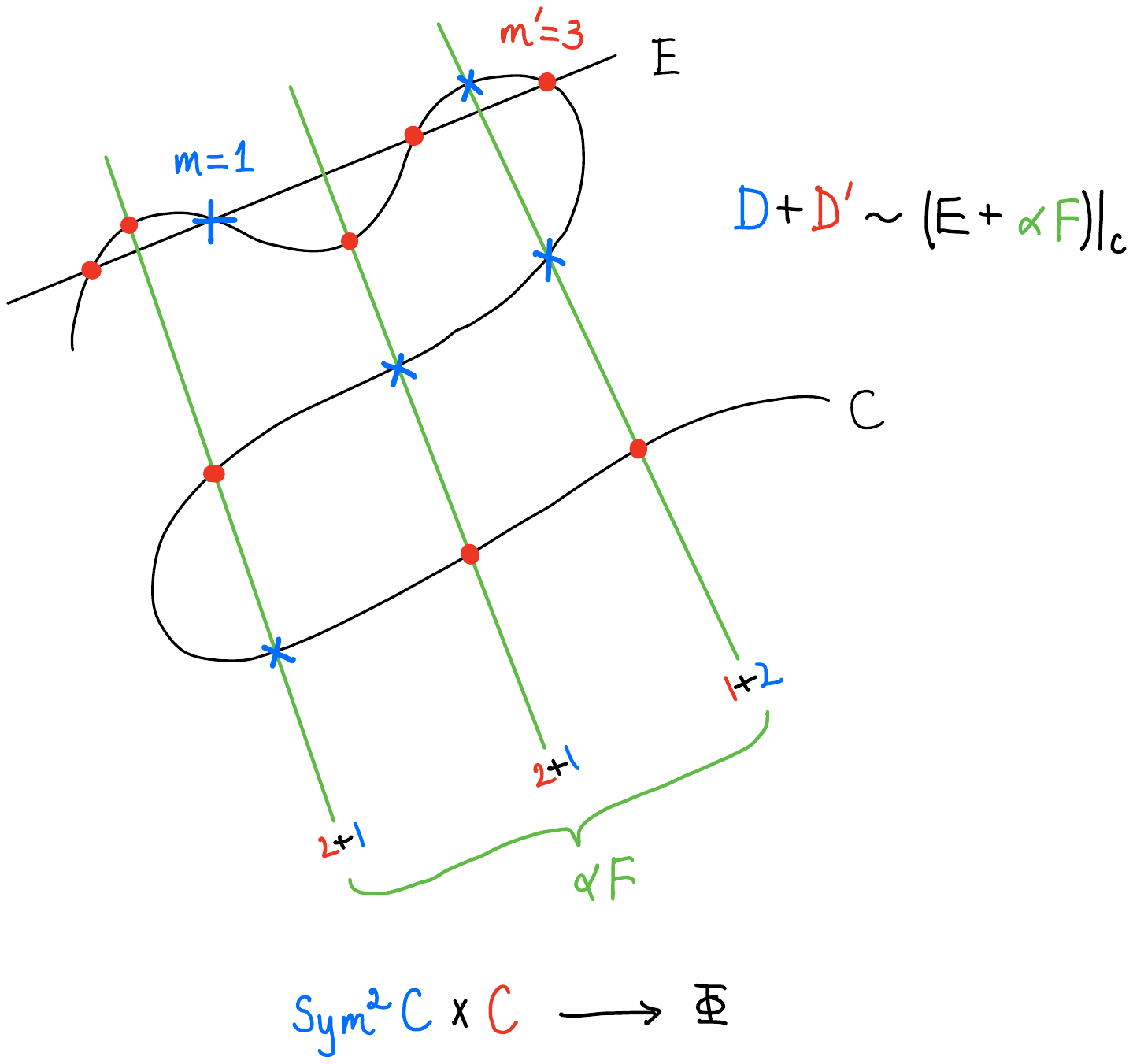}
\end{center}
\end{multicols}

\vspace{-.15in}
A component of $\Phi^\circ$ corresponding to the distribution of base points described above exists so long as $m \geq 0,$ and $\deg D -m-\alpha \geq 0$ and $\deg D' - m' -\alpha \geq 0$, or equivalently
\[\max\{0, b - a - n\} \leq m \leq \frac{g-n}{2} + 1 + b - c.\]
If $C \cap E$ is reduced, it follows that the number of components is the sum of binomial coefficients in the statement of the proposition.

Finally, we prove connectedness when $\alpha > 0$.
If $\max\{0, b - a - n\} = \frac{g - n}{2} + 1 + b - c$ then we have either:
\begin{enumerate}
\item $0 = \frac{g-n}{2} + 1 + b - c$, in which case $\Phi^\circ$ has only one irreducible component, corresponding to all base points living in $D'$. 
\item $b - a - n = \frac{g - n}{2} + 1 + b - c$, which is equivalent to $\alpha = 0$.
\end{enumerate}
Therefore, we may assume $\max\{0, b - a - n\} < \frac{g - n}{2} + 1 + b - c$. This means there are at least two possible values for the number of base points in $D$, so the set of all possible distributions of base points is connected via the following moves: a single point $x \in E \cap C$ moves from $D$ to $D'$ or a single point $x \in E \cap C$ moves from $D'$ to $D$.
Components connected by such a move meet when a moving fiber containing two points of $D$ specializes so as to contain $x$ (resp. a moving fibers containing two points of $D'$ specializes to contain $x$).
\end{proof}

\begin{Example}[Example \ref{ex1} continued: $g = 11, n = 3, (a, b, c) = (-8, -4, -1)$] \label{631}
Here we describe the components of the curve $\Sigma_{(-8,-4,-1)}(C, f)$ when $C \subset \ff_3$ is genus $11$.
We first study $\Phi = \{(D, D') : D + D' \sim K - 6H\} \subset \Sym^3(C) \times \Sym^2(C)$. We have $K - 6H = (E + F)|_C$, so $\alpha = 1$. 
The degree of $E$ under the map to $\pp^{10}$ is $4$ and $E \cap C$ consists of $2$ points, call them $x$ and $y$. Given a point $p$ in $C$, let $p'$ and $p''$ denote points such that $p + p' + p'' \sim F|_C$. The variety $\Phi$ has $4$ components corresponding to each of the following possibilities
\begin{enumerate}
\item[{\color{red} (1)}] $x \in D$ and $y \in D'$: each point on this component corresponds to a point $p \in C$ via $D' = y + p$ and $D = x + p' + p''$.
\item[{\color{blue} (2)}] $y \in D$ and $x \in D'$: this component is a copy of $C$ for the same reason as above.
\item[{\color{black!40!green} (3)}] $x, y \in D$: each point of this component corresponds to a point $p \in C$ via $D = x + y + p$ and $D' = p' + p''$.
\item[(4)] \label{cont} $x, y \in D'$: each point on this component corresponds to a point $q \in \pp^1$ via $D' = x + y$ and $D = f^{-1}(q)$.
\end{enumerate}
We have that $\Phi^\circ$ is the complement of component (4). In fact, component (4) is contracted in the map $\Phi \rightarrow \Sigmabar_{(-8, -4, -1)}$ with image $\O_C$.

\vspace{.05in}
\begin{center}
\begin{tikzpicture}[scale=1.3]
\node at (1, 2.5) {$\Phi$};
  \draw[domain=1.36:2.61,smooth,variable=\x,red] plot ({\x},{5*(\x-2)*(\x-2)});
    \draw[domain=-0.61:0.64,smooth,variable=\x,blue] plot ({\x},{5*(\x)*(\x)});
  \draw[domain = -1:3,smooth,variable=\x,black!40!green] plot ({\x},{-.1*(\x-1)*(\x-1) + .7});
  \draw (.1,1.4) -- (2-.1,1.4);
 
\end{tikzpicture}
\hspace{.2in} 
\begin{tikzpicture}
\draw[->] (0,1) -- (1,1);
\node at (0, 0) {\color{white} h};
\end{tikzpicture}
\hspace{.3in}
\begin{tikzpicture}[scale=1.3]
\node at (1, 2.5) {$\Sigmabar_{(-8,-4,-1)}$};
  \draw[domain=.8:3.1,smooth,variable=\x,red] plot ({\x},{1.4*(\x-2)*(\x-2)});
    \draw[domain=-1.1:1.2,smooth,variable=\x,blue] plot ({\x},{1.4*(\x)*(\x)});
  \draw[domain = -1:3,smooth,variable=\x,black!40!green] plot ({\x},{-.1*(\x-1)*(\x-1) + .7});
  \node[scale=.5] at (1, 1.4) {$\bullet$};
  \node[scale=0.6] at (1.2,1.4) {$\O_C$};
\end{tikzpicture}
\end{center}
\vspace{.05in}
The intersections of components on $\Phi$ are the points $(D, D')$ below, listed left to right:
\begin{align*}
(x+y+x',x+x''), (x+y+x'',x+x'), (y+y'+y'',x+y) \\
 (x+x'+x'',x+y),  (x+y+y', y+y''), (x+y+y'', y + y').
\end{align*}

\end{Example}
\section{Proof of Theorem and Corollary}

\begin{proof}[Proof of Theorem \ref{maint}]
When $c -b \leq 1$ or $b - a \leq 1$, the theorem is proved by Lemma \ref{ez}, so assume we are in the case where $c - b, b - a \geq 2$.
If $\alpha = \frac{g+n}{2} + 1 - c + a \geq n$, then Lemma \ref{bign} says $\Phi^\circ$ is non-empty and irreducible. By Lemma \ref{dim}, we have
\begin{align*}
\dim \Sigma_{(a, b, c)} = \dim \Phi^\circ = h^0(C, \O_C(E + \alpha F)) -1 = 2\alpha - n + 1 = g-u(a, b, c).
\end{align*}
If $1 \leq \alpha < n$, then Proposition \ref{smalln} says $\Phi^\circ$ is non-empty if and only if $c - b, b - a \leq 1 + \frac{g-n}{2}$. In that case, Lemma \ref{dim} shows
\[\dim \Sigma_{(a, b, c)} = \dim \Phi^\circ = h^0(C, \O_C(E+ \alpha F)) - 1 = \alpha =  \frac{g+n}{2} + 1 - c + a.\]
Finally, when $\alpha = 0$, we have $\Phi^\circ = \{(D, D') : D + D' = E \cap C\}$. If $C$ is a general curve of Maroni invariant $n$, Lemma \ref{a0} shows $E \cap C$ consists of $\frac{g - 3n}{2} + 1 = c - a - 2n$ reduced points. Since $\alpha = 0$, we have $\deg D = g + 2 + a + b - 2c = b - a - n$. The points of $\Phi^\circ \cong \Sigma_{(a, b, c)}$ correspond to the ways of choosing which points of $E \cap C$ live in $D$.
\end{proof}

\begin{proof}[Proof of Corollary \ref{cor}]
First assume $c - b \geq 2$ and $b - a \geq 2$.
Suppose $f:C \rightarrow \pp^1$ is a general trigonal curve of genus $g = u(a, b, c) = 2c - 2a - 3$. Since $g$ is odd, the general Maroni invariant is $n = 1$.
By \cite[Theorem 1.2]{L}, $\Sigma_{(a, b, c)} = \Sigmabar_{(a, b, c)}$ is reduced of dimension $0$. Hence,
\[[\Sigmabar_{(a, b, c)}] = \#(\Sigmabar_{(a, b, c)}) \times [\mathrm{pt}] = {c - a - 2 \choose b - a - 1} \times \frac{1}{g!} \theta^g = \frac{1}{u(a, b, c)!} {c - a - 2 \choose b - a - 1} \theta^{u(a, b, c)}.\]
By the universality of the formulas for classes of splitting loci (see \cite[Lemma 5.4]{L}), this is the class of $\Sigmabar_{(a, b, c)}$ whenever it has the expected dimension.

When $c - b \leq 1$ or $b - a \leq 1$, we can run the same argument, considering a general trigonal curve of genus $g = u(a,b,c)$. Here, we know $\Sigma_{(a, b, c)}$ is irreducible so it is a single point and $[\Sigmabar_{(a, b, c)}] = \frac{1}{u(a, b, c)!} \theta^{u(a, b, c)}$. This case is also covered by \cite[Lemma 5.5]{L} (and the corresponding calculation with dual splitting types).
\end{proof}

\end{document}